\documentclass{amsart}
\usepackage{amsmath, amssymb, amsthm, epsfig, graphicx, color, booktabs, url, a4wide}

\setlength{\marginparwidth}{1in}
\let\oldmarginpar\marginpar
\renewcommand\marginpar[1]{\-\oldmarginpar[\raggedleft\footnotesize #1]%
{\raggedright\footnotesize #1}}

\usepackage[colorlinks]{hyperref}
\hypersetup{linkcolor=blue, urlcolor=blue, citecolor=red}

\newtheorem{theorem}{Theorem}[section]
\newtheorem{corollary}[theorem]{Corollary}
\newtheorem{lemma}[theorem]{Lemma}

\newtheorem{definition}[theorem]{Definition}
\newenvironment{defn}{\begin{definition}\rm}{\end{definition}}

\newcommand{\proofref}[1]{\proof[Proof of Theorem \ref{#1}.]}

\renewcommand{\to}{\longrightarrow}
\newcommand{\lcm}{\operatorname{lcm}}
\newcommand{\im}{\operatorname{im}}
\newcommand{\N}{\mathbb{N}}

\newcommand{\U}{H}
\newcommand{\id}{\operatorname{id}}

\newcommand{\set}[2]{\ensuremath{\{\: #1 \: |\: #2 \:\}}}

\title{The number of nilpotent semigroups of degree 3}
\author{Andreas Distler and James D. Mitchell}
\thanks{The first author acknowledges the supported by 
  the University of St Andrews and the project PTDC/MAT/101993/2008 of
  Centro de \'Algebra da Universidade de Lisboa, financed by FCT and
  FEDER.}

\begin{document}

\begin{abstract}
A semigroup is \emph{nilpotent} of degree $3$ if it has a zero, every
product of $3$ elements equals the zero, and some product of $2$
elements is non-zero. It is part of the folklore of semigroup theory
that almost all finite semigroups are nilpotent of degree $3$. 

We give formulae for the number of nilpotent semigroups of degree $3$
with $n\in\N$ elements up to equality, isomorphism, and isomorphism or
anti-isomorphism. Likewise, we give formulae for the number of
nilpotent commutative semigroups with $n$ elements up to equality and
up to isomorphism.
\end{abstract}
\maketitle


\section{Introduction}

The topic of enumerating finite algebraic or combinatorial objects of
a particular type is classical. Many theoretical enumeration results
were obtained thanks to the advanced orbit counting methods developed
by Redfield~\cite{Red27}, Polya~\cite{Pol37}, and de
Bruijn~\cite{Bru59}. Numerous applications of the method known as
power group enumeration can be found in~\cite{HP73}. Of particular
interest for this paper is the usage to count universal algebras
in~\cite{Har66}.

The enumeration of finite semigroups has mainly been performed by 
exhaustive search and the results are therefore restricted to very small
orders. The most recent numbers are of semigroups of order
9~\cite{Dis10}, of semigroups with identity of
order 10~\cite{DK09},
commutative semigroups of order 10~\cite{Gri03}, and
linearly ordered semigroups of order 7~\cite{Sla95}.

In this paper we use power group enumeration to develop formulae for
the number of semigroups of a particular type, which we now define.

A semigroup $S$ is \emph{nilpotent} if there exists a $r \in \N$ such
that the set 
\[
S^r=\set{s_1s_2\cdots s_r}{s_1, s_2, \ldots, s_r\in S}
\]
has size $1$. If $r$ is the least number such that $|S^r|=1$, then we
say that $S$ has \emph{(nilpotency) degree~$r$}.

As usual, the number of `structural types' of objects is of greater
interest than the number of distinct objects. Let $S$ and $T$ be
semigroups. Then a function $f:S\to T$ is
an \emph{isomorphism} if it is a bijection and $f(x\cdot y)=f(x)\cdot
f(y)$ for all $x,y\in S$. The \emph{dual} $S^*$ of $S$ is the
semigroup with multiplication $\ast$ defined by $x\ast y=y\cdot x$ on
the set $S$. A bijection $f:S
\to T$ is an \emph{anti-isomorphism} if $f$ is an isomorphism from $S^*$
to $T$. Throughout this article we distinguish between the number of
distinct semigroups on a set, the number up to isomorphism, and the
number up to isomorphism or anti-isomorphism. We shall refer to the
number of distinct semigroups that can be defined on a set as the
number \emph{up to equality}.

For $n\in\N$ we let $z(n)$ denote the number of nilpotent semigroups of
degree $3$ on $\{1,2,\ldots, n\}$. The particular interest in
nilpotent semigroups of degree $3$ stems from the observation that almost all
finite semigroups are of this type. More precisely, Kleitman,
Rothschild, and Spencer identified $z(n)$ in~\cite{KRS76} as an
asymptotic lower bound for the number of all semigroups on that
set. Furthermore, J\"urgensen, Migliorini, and Sz\'ep suspected
in~\cite{JMS91} that $z(n)/2n!$ was a good lower bound for the
number of semigroups with $n$ elements up to isomorphism or
anti-isomorphism based on the comparison of these two numbers for
$n=1,2, \ldots, 7$. This belief was later supported by Satoh, Yama,
and Tokizawa \cite[Section 8]{SYT94} and the first
author~\cite{Dis10} in their analyses of the semigroups with orders~8
and 9, respectively.

This paper is structured as follows: in the next
section we present and discuss our results, delaying certain technical details
for later sections; in Section~\ref{sec_constr} we describe a way to
construct semigroups of degree $2$ or $3$; in Section~\ref{sec_3nil}
nilpotent semigroups of degree $3$ are considered up to equality; in
Section~\ref{sec_PET} we present the relevant
background material from power group enumeration and a number of
technical results in preparation for Section~\ref{sec_proof} where we
give the proofs for our main theorems. Tables containing the first few
terms of the sequences defined by the various formulae in the paper
can be found at the appropriate points. The implementation used to obtain
these numbers is provided as the function {\tt
  Nr3NilpotentSemigroups} in the computer algebra system {\sf GAP}
\cite{GAP4} by the package {\sf Smallsemi}~\cite{smallsemi}. 

\section{Formulae for the number of nilpotent semigroups of degree $3$}

\subsection{Up to equality}
The number of nilpotent and commutative nilpotent semigroups of degree 3
on a finite set can be computed using formulae given
in~\cite[Theorems 15.3 and 15.8]{JMS91}. We summarise the relevant
results in the following theorem. As the theorems in~\cite{JMS91} are
stated incorrectly we shall give a proof for Theorem~\ref{lem_3nil_all} in
Section~\ref{sec_3nil}.
 
\begin{theorem}
\label{lem_3nil_all}
For $n\in\N$ the following hold:
\begin{enumerate} 
\item 
the number of distinct nilpotent semigroups of degree $3$ on
$\{1,2,\ldots, n\}$ is
\begin{equation*}
\sum_{m=2}^{a(n)}{n \choose m}m\sum_{i=0}^{m-1}(-1)^i{m-1 \choose
  i}(m-i)^{\left((n-m)^2\right)}
\end{equation*}
where $a(n)=\left\lfloor n+1/2-\sqrt{n-3/4}\,\right\rfloor$;
\item
the number of distinct commutative nilpotent semigroups of degree $3$ on
$\{1,2,\ldots, n\}$ is
\begin{equation*}
\sum_{m=2}^{c(n)}{n \choose m}m
\sum_{i=0}^{m-1}(-1)^i{m-1 \choose i}(m-i)^{(n-m)(n-m+1)/2}
\end{equation*}
where $c(n)=\left\lfloor n+3/2-\sqrt{2n+1/4}\,\right\rfloor.$
\end{enumerate}
\end{theorem}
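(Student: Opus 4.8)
The plan is to set up a bijection between nilpotent semigroups of degree $3$ on $\{1,\dots,n\}$ and triples $(A,0,\mu)$, where $A\subseteq\{1,\dots,n\}$, $0\in A$ is a designated element, and $\mu$ is a map on the complement of $A$ whose image is large enough, and then to count these triples directly. Part (ii) will follow by the same argument with the extra demand that $\mu$ be symmetric.

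\emph{Structure.} Let $S$ be nilpotent of degree $3$ with zero $0$, and put $A=S^{2}$, $m=|A|$. Since any product of three elements is also a product of two, $S^{3}\subseteq S^{2}$; as $|S^{3}|=1$ this gives $0\in A$, and $m\ge 2$ precisely because some product of two elements is non-zero. If $a\in A=S^{2}$, say $a=st$, then $au=stu\in S^{3}=\{0\}$ and $ua=ust\in S^{3}=\{0\}$ for all $u\in S$; hence every product with at least one factor in $A$ equals $0$. Writing $B=S\setminus A$, so $|B|=n-m$, the multiplication of $S$ is therefore completely determined by $A$, $0$, and the restriction $\mu\colon B\times B\to A$ (the image lies in $A$ because $\mu(B\times B)\subseteq S^{2}=A$), and the requirement $S^{2}=A$ forces $\mu(B\times B)\cup\{0\}=A$, i.e.\ $\mu$ attains every value in $A\setminus\{0\}$.

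\emph{Converse and bijectivity.} Conversely, fix an $m$-subset $A$ of $\{1,\dots,n\}$ with $m\ge 2$, an element $0\in A$, and any map $\mu\colon B\times B\to A$ with $A\setminus\{0\}\subseteq\mu(B\times B)$, where $B$ is the complement of $A$; define a product by $xy=\mu(x,y)$ for $x,y\in B$ and $xy=0$ otherwise. For all $x,y\in\{1,\dots,n\}$ we have $xy\in A$, so $(xy)z=0=x(yz)$ for every $z$; thus the product is associative, every product of three elements is $0$, the element $0$ is a zero, and $S^{2}=\mu(B\times B)\cup\{0\}=A$ has size $m\ge 2$, so $S$ is nilpotent of degree exactly $3$. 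From a multiplication table one reads off $S^{2}$ ($=A$), the zero ($=0$), and the restriction to $B\times B$ ($=\mu$), so distinct triples give distinct tables; combined with the previous paragraph this is a bijection.

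\emph{Counting and main obstacle.} There are $\binom{n}{m}$ choices of $A$ and $m$ choices of $0\in A$; given these, the number of maps from the $(n-m)^{2}$-element set $B\times B$ to $A$ whose image contains the fixed $(m-1)$-element set $A\setminus\{0\}$ is, by inclusion--exclusion over the omitted part of $A\setminus\{0\}$,
\[
\sum_{i=0}^{m-1}(-1)^{i}\binom{m-1}{i}(m-i)^{(n-m)^{2}}.
\]
Summing over $m$ proves (i); the summation range $2\le m\le a(n)$ is exactly where such a $\mu$ exists, since a map from a set of size $(n-m)^{2}$ to $A$ with prescribed $(m-1)$-element subset in its image exists iff $(n-m)^{2}\ge m-1$, which rearranges to $m\le n+\tfrac12-\sqrt{n-\tfrac34}$, and outside this range the alternating sum vanishes anyway. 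For (ii) one additionally requires $\mu$ symmetric, hence a map on the $\binom{n-m+1}{2}=(n-m)(n-m+1)/2$ unordered pairs with repetition from $B$; the identical count with this exponent and the existence bound $(n-m)(n-m+1)/2\ge m-1$, equivalently $m\le n+\tfrac32-\sqrt{2n+\tfrac14}$, gives (ii). I expect the semigroup-theoretic content to be routine once one has the observation that $S^{2}$ annihilates $S$; the delicate points are verifying that the correspondence is genuinely a bijection — in particular that $S^{2}$ equals $A$ rather than merely being contained in it, which is what pins down the image condition on $\mu$ — and the elementary but error-prone passage from the quadratic inequalities to the floor expressions $a(n)$ and $c(n)$, which is presumably where the statements in \cite{JMS91} go astray.
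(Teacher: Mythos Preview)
Your proof is correct and follows essentially the same approach as the paper: parametrise nilpotent semigroups of degree~$3$ by the triple consisting of $S^{2}$, the zero element, and the restricted multiplication map subject to an image condition, then count via inclusion--exclusion and extract the summation bound from the pigeonhole constraint $m-1\le (n-m)^{2}$ (respectively $m-1\le (n-m)(n-m+1)/2$). The only differences are cosmetic---your $A$ is the paper's $B$ and vice versa---and that you make the bijection and its inverse fully explicit, whereas the paper leans on the construction $\U(A,\psi,z)$ set up in Section~\ref{sec_constr}.
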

Note that there are no nilpotent semigroups of degree $3$ with fewer than $3$ elements. 
 Accordingly, the formulae in Theorem \ref{lem_3nil_all} yield that  the number of nilpotent and commutative nilpotent semigroups of degree $3$ with $1$ or $2$ elements is $0$. The first few non-zero terms of the sequences given by Theorem \ref{lem_3nil_all} are shown in Tables ~\ref{tab_3nil} and \ref{tab_3nil_comm}.

\begin{table}
{\small
\caption{\label{tab_3nil} Numbers of nilpotent semigroups of degree $3$ up to equality}
\begin{tabular}{rl}
\toprule
$n$ & number of nilpotent semigroups of degree $3$ on $\{1,2,\ldots, n\}$ \\
\midrule
3&6\\
4&180\\
5&11\,720\\
6&3\,089\,250\\
7&5\,944\,080\,072\\
8&147\,348\,275\,209\,800\\
9&38\,430\,603\,831\,264\,883\,632\\
10&90\,116\,197\,775\,746\,464\,859\,791\,750\\
11&2\,118\,031\,078\,806\,486\,819\,496\,589\,635\,743\,440\\
12&966\,490\,887\,282\,837\,500\,134\,221\,233\,339\,527\,160\,717\,340\\
13&17\,165\,261\,053\,166\,610\,940\,029\,331\,024\,343\,115\,375\,665\,769\,316\,911\,576\\
14&6\,444\,206\,974\,822\,296\,283\,920\,298\,148\,689\,544\,172\,139\,277\,283\,018\,112\,679\,406\,098\,010\\
15&38\,707\,080\,168\,571\,500\,666\,424\,255\,328\,930\,879\,026\,861\,580\,617\,598\,218\,450\,546\,408\,004\,390\,044\,578\,120\\
\bottomrule
\end{tabular}
}
\end{table}

\begin{table}
{\small
\caption{\label{tab_3nil_comm} Numbers of commutative
  nilpotent semigroups of degree $3$ up to equality}
\begin{tabular}{rl}
\toprule
$n$ & number of commutative nilpotent semigroups of degree $3$ on $\{1,2,\ldots, n\}$\\
\midrule
3&6\\
4&84\\
5&1\,620\\
6&67\,170\\
7&7\,655\,424\\
8&2\,762\,847\,752\\
9&3\,177\,531\,099\,864\\
10&11\,942\,816\,968\,513\,350\\
11&170\,387\,990\,514\,807\,763\,280\\
12&11\,445\,734\,473\,992\,302\,207\,677\,404\\
13&3\,783\,741\,947\,416\,133\,941\,828\,688\,621\,484\\
14&5\,515\,869\,594\,360\,617\,154\,295\,309\,604\,962\,217\,274\\
15&33\,920\,023\,793\,863\,706\,955\,629\,537\,246\,610\,157\,737\,736\,800\\
16&961\,315\,883\,918\,211\,839\,933\,605\,601\,923\,922\,425\,713\,635\,603\,848\,080\\
17&160\,898\,868\,329\,022\,121\,111\,520\,489\,011\,089\,643\,697\,943\,356\,922\,368\,997\,915\,120\\
\bottomrule
\end{tabular}
}
\end{table}

\subsection{Up to isomorphism and up to isomorphism or anti-isomorphism}
Our main results are explicit formulae for the number of nilpotent and
commutative nilpotent semigroups of degree $3$ on any finite set up to
isomorphism and up to isomorphism or anti-isomorphism. As every
commutative semigroup is equal to its dual we obtain three different
formulae. 

If $j$ is a partition of $n\in \N$, written as $j \vdash n$, then we denote
by $j_i$ the number of summands equalling $i$. The first of our
main theorems, dealing with nilpotent semigroups of degree $3$ up to
isomorphism, can then be stated as follows:

\begin{theorem}
\label{thm_up_to_iso}
Let $n,p,q\in\N$. For $1\leq q < p$ denote
\begin{equation}
\label{eq_N}
N(p,q) = \sum_{j \vdash q-1} \sum_{k \vdash p-q} \left(\prod_{i=1}^{q-1}
  j_i!\,i^{j_i} \prod_{i=1}^{p-q} k_i!\,i^{k_i}\right)^{-1} \prod_{a,b=1}^{p-q}
\left(1+ \sum_{d \mid \lcm(a,b)} dj_d
\right)^{k_{a}k_{b}\gcd(a,b)}.
\end{equation}
Then the number of nilpotent semigroups of degree $3$ and order $n$ up to
isomorphism equals
\begin{equation*}
\sum_{m=2}^{a(n)}\left(N(n,n)-N(n-1, m-1)\right)\mbox{ \ where \ } 
a(n)=\left\lfloor n+1/2-\sqrt{n-3/4}\,\right\rfloor,
\end{equation*}
\end{theorem}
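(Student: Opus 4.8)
The plan is to count nilpotent semigroups of degree $3$ and order $n$ up to isomorphism by stratifying according to the size $m$ of the image of the multiplication, i.e. the size of $S^2$, and then applying power group enumeration. Recall from the construction in Section~\ref{sec_constr} (and from Theorem~\ref{lem_3nil_all}) that a nilpotent semigroup of degree~$3$ on $\{1,\ldots,n\}$ with zero element fixed is determined by a map $\{1,\ldots,n-m\}^2 \to \{1,\ldots,m\}$ (the product of two non-zero, non-image elements, landing in the set $S^2$ of size $m$ including the zero) subject to the surjectivity-type condition that the image together with the zero has exactly $m$ elements; the degree being exactly $3$ (not $\le 2$) forces some product to be non-zero, which is why $m\ge 2$. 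Two such semigroups are isomorphic precisely when they are related by a permutation of the underlying set that respects this block structure, so the relevant group acting is (a wreath-type product of) symmetric groups on the $m-1$ non-zero image points and on the $n-m$ remaining points.

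The key steps, in order, are: (1) Fix $m$ with $2\le m\le a(n)$ and set up the power group acting on the set of functions $\{1,\ldots,p-q\}^2\to\{0,1,\ldots,q-1\}$, where I will use $p=n$ (or $n-1$) and $q=m$; the group on the domain side is $\sym(p-q)$ acting diagonally on ordered pairs, and the group on the range side is $\sym(q-1)$ acting on the non-zero target values with the zero fixed. (2) Apply the Cauchy--Frobenius (power group enumeration) lemma: the number of orbits is the average over pairs of permutations $(\sigma,\tau)$, with $\sigma\in\sym(p-q)$ of cycle type $k\vdash p-q$ and $\tau\in\sym(q-1)$ of cycle type $j\vdash q-1$, of the number of functions fixed by $(\sigma,\tau)$. (3) Compute that fixed-function count: a function is fixed iff it is constant on each orbit of $\sigma$ on ordered pairs and the common value is fixed by the corresponding power of $\tau$; the orbits of $\sigma$ on $\{1,\ldots,p-q\}^2$ coming from an $a$-cycle and a $b$-cycle have size $\lcm(a,b)$ and there are $\gcd(a,b)$ of them, and the number of values in $\{0,\ldots,q-1\}$ fixed by $\tau^{\lcm(a,b)}$ is $1+\sum_{d\mid \lcm(a,b)} dj_d$ (the $1$ being the zero, the rest being the non-zero values in cycles of $\tau$ whose length divides $\lcm(a,b)$). (4) Collecting the contribution $\big(1+\sum_{d\mid\lcm(a,b)}dj_d\big)^{k_ak_b\gcd(a,b)}$ over all $a,b$, and dividing by the order of the group expressed through cycle types as $\prod_i j_i!\,i^{j_i}\prod_i k_i!\,i^{k_i}$, gives exactly $N(p,q)$ as in~\eqref{eq_N} — but this counts all functions into a set of size $q$, without the surjectivity constraint, i.e. all nilpotent semigroups of degree $\le 3$ with $S^2\subseteq$ a fixed $q$-set, up to isomorphism.

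It remains to pass from "$S^2$ contained in a $q$-set'' to "$S^2$ has size exactly $m$ and degree exactly $3$''. This is an inclusion–exclusion / telescoping argument: $N(n,n)$ counts nilpotent semigroups on an $n$-set with $S^2$ of any size from $1$ to $n$ up to isomorphism (including the degenerate null semigroup where $S^2=\{0\}$), and $N(n-1,m-1)$ counts those with $|S^2|\le m-1$ under the natural correspondence obtained by deleting one non-image point; the difference $N(n,n)-N(n-1,m-1)$ isolates the semigroups with $|S^2|=m$, and summing over $m=2,\ldots,a(n)$ discards the null semigroup (the $m=1$ term) and collects exactly the degree-$3$ nilpotent semigroups. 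Verifying this telescoping — in particular that $N(n-1,m-1)$ is genuinely the count of isomorphism types with image size at most $m-1$ realised inside an $n$-element carrier, and that the bound $a(n)$ is the correct maximal image size (a point already established in the proof of Theorem~\ref{lem_3nil_all}) — is the main obstacle, since it requires checking that the group action and the "forget a point'' map interact correctly with isomorphism classes rather than with labelled objects. I would handle this by a careful bijective argument at the level of orbits, deferring the precise reconciliation of the two power-group actions to the technical lemmas of Section~\ref{sec_PET}.
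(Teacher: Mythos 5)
Your reduction of the count to a power group enumeration is the same as the paper's: fix $m=|S^2|$, regard a semigroup as a function $([n]\setminus[m])\times([n]\setminus[m])\to[m]$ with zero $1$, act on the domain by $S_{[n]\setminus[m]}$ diagonally and on the range by the stabiliser $U_m$ of $1$ in $S_m$, and count orbits via de Bruijn's constant form of power group enumeration (the paper's Theorem~\ref{thm_PGET}, together with Lemmas~\ref{lem_unique_non_gens} and~\ref{lem_iso}(i), which justify the assertion you make without proof that isomorphism is exactly this orbit equivalence). Your fixed-point computation arrives at the right count, but the characterisation you state is off: a function fixed by $(\sigma,\tau)$ need not be constant on a $\sigma$-orbit of pairs (that would force its value to be fixed by $\tau$ itself, giving $1+j_1$ choices); rather its value at one representative determines it on the whole orbit and must be fixed by $\tau^{\lcm(a,b)}$, which is what yields the factor $\bigl(1+\sum_{d\mid\lcm(a,b)}dj_d\bigr)^{k_ak_b\gcd(a,b)}$ you then use. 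With that repaired, your derivation of $N(p,q)$ in \eqref{eq_N} is the paper's.

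The genuine gap is in the final subtraction step, which you yourself flag as the main obstacle and defer. First, the displayed formula in the theorem statement contains a typo: $N(n,n)$ is not even covered by the definition (which requires $q<p$), and the intended summand, as the paper's proof and the parallel Theorems~\ref{thm_up_to_equi} and~\ref{thm_comm} show, is $N(n,m)-N(n-1,m-1)$. Your attempt to justify the literal statement misassigns the meanings of both terms and has inconsistent logic: even granting your reading (``all image sizes'' minus ``image size at most $m-1$''), the difference would count semigroups with $|S^2|\geq m$, not $|S^2|=m$, and summing over $m$ would then overcount. The correct argument is local to each $m$: $N(n,m)$ counts orbits of \emph{all} functions $([n]\setminus[m])\times([n]\setminus[m])\to[m]$ under $S_{[n]\setminus[m]}^{\times 2}\times U_m$, i.e.\ it also includes functions whose image fails to contain $[m]\setminus\{1\}$ (these give semigroups with $S^2$ properly contained in $[m]$, possibly of degree $2$); the orbits of those deficient functions are in bijection with the orbits counted by $N(n-1,m-1)$, since up to the $U_m$-action one may assume the missing value is a fixed element of $[m]\setminus\{1\}$, leaving a function into a range with one fewer point while the domain still has $n-m=(n-1)-(m-1)$ elements. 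Hence $N(n,m)-N(n-1,m-1)$ is exactly the number of isomorphism classes in $Z_{n,m}$, and Lemma~\ref{lem_unique_non_gens} guarantees that summing over $2\leq m\leq a(n)$ counts every degree-$3$ nilpotent semigroup of order $n$ once. This orbit-level ``delete one available image point'' bijection is the piece your proposal leaves unproved (and states incorrectly), so as written the proof is incomplete at precisely the step that makes the theorem's formula work.
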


\begin{table}
{\small
\caption{\label{tab_3nil_uptoiso} Numbers of nilpotent semigroups of
  degree $3$ up to isomorphism}
\begin{tabular}{rl}
\toprule
$n$ & number of non-isomorphic nilpotent semigroups of degree $3$ of order $n$\\
\midrule
 3 & 1\\
 4 & 9\\
 5 & 118\\
 6 & 4\,671\\
 7 & 1\,199\,989\\
 8 & 3\,661\,522\,792\\
 9 & 105\,931\,872\,028\,455\\
10 & 24\,834\,563\,582\,168\,716\,305\\
11 & 53\,061\,406\,576\,514\,239\,124\,327\,751\\
12 & 2\,017\,720\,196\,187\,069\,550\,262\,596\,208\,732\,035\\
13 & 2\,756\,576\,827\,989\,210\,680\,367\,439\,732\,667\,802\,738\,773\,384\\
14 & 73\,919\,858\,836\,708\,511\,517\,426\,763\,179\,873\,538\,289\,329\,852\,786\,253\,510\\
15 & 29\,599\,937\,964\,452\,484\,359\,589\,007\,277\,447\,538\,854\,227\,891\,149\,791\,717\,673\,581\,110\,642\\
\bottomrule
\end{tabular}
}
\end{table}

The second of our main theorems gives the number of nilpotent semigroups of
degree $3$ up to isomorphism or anti-isomorphism.

\begin{theorem}
\label{thm_up_to_equi}
Let $n,p,q\in\N$. For $1\leq q < p$ let $N(p,q)$ as in \eqref{eq_N}
and denote
\begin{equation}
\label{eq_L}
L(p,q) = \frac{1}{2}N(p,q) + \frac{1}{2}
\sum_{j \vdash q-1} \sum_{k \vdash p-q}
\left(\prod_{i=1}^{q-1}
  j_i!\,i^{j_i} \prod_{i=1}^{p-q} k_i!\,i^{k_i}\right)^{-1}
\prod_{a=1}^{p-q}\left(
q_a^{k_a}p_{a,a}^{k_a^2-k_a}\prod_{b=1}^{a-1} p_{a,b}^{2k_ak_b}
\right),
\end{equation}
where 
\[
p_{a,b}=\left(1+ \sum_{d \mid \lcm(2,a,b)} dj_d \right)^{ab/\lcm(2,a,b)}
\]
and
\[
q_a=
\begin{cases}
(1+ \sum_{d \mid a} dj_d)(1+ \sum_{d \mid 2a} dj_d)^{(a-1)/2}
& \text{ if \ } a \equiv 1 \mod 2\\
(1+ \sum_{d \mid a} dj_d)^a 
& \text{ if \ }  a \equiv 0 \mod 4\\
(1+ \sum_{d \mid a/2} dj_d)^2(1+ \sum_{d \mid a} dj_d)^{a-1} 
& \text{ if \ }  a \equiv 2 \mod 4.
\end{cases}
\]
Then the number of nilpotent semigroups of degree $3$ and order $n$ up to
isomorphism or anti-isomorphism equals
\begin{equation*}
\sum_{m=2}^{a(n)}\left(L(n,m)-L(n-1,m-1)\right)\mbox{ \ where \ }
a(n)=\left\lfloor n+1/2-\sqrt{n-3/4}\,\right\rfloor.
\end{equation*}
\end{theorem}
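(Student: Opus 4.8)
The proof will run in parallel with that of Theorem~\ref{thm_up_to_iso}, the only change being that the power group is enlarged so as to also record dualisation. By the construction recalled in Section~\ref{sec_constr}, a nilpotent semigroup of degree $3$ of order $n$ with $|S^2|=m$ is the same datum as a set split as $G\cup Z$ with $|G|=n-m$ and $|Z|=m$, a zero $0\in Z$, and a map $\mu\colon G\times G\to Z$ whose image together with $0$ is all of $Z$; associativity is automatic, the dual semigroup corresponds to $(x,y)\mapsto\mu(y,x)$, and every isomorphism or anti-isomorphism between two such objects fixes the zero and carries $S^2$ onto $S^2$. Fixing $G$, $Z$ and $0$ I would therefore let $\U=\sym(G)\times\sym(Z\setminus\{0\})$ act on maps $G\times G\to Z$ in the usual power-group fashion — $\sym(G)$ permuting positions through the diagonal action on $G\times G$, $\sym(Z\setminus\{0\})$ permuting non-zero values — and adjoin the involution $\tau$ sending $\mu$ to $(x,y)\mapsto\mu(y,x)$. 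Since $\tau$ centralises $\U$, the group they generate is $\widehat{\U}=\genset{\U,\tau}\cong\U\times\genset{\tau}$ of order $2|\U|$, and the number of nilpotent semigroups of degree $3$ of order $n$ with $|S^2|=m$ up to isomorphism or anti-isomorphism is the number of $\widehat{\U}$-orbits on the admissible maps $\mu$.

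First I would remove the admissibility condition exactly as in the proof of Theorem~\ref{thm_up_to_iso}. Writing $L(p,q)$ for the number of $\widehat{\U}$-orbits on \emph{all} maps $G\times G\to Z$ when $|G|=p-q$ and $|Z|=q$, a map whose image omits some non-zero value can be moved by $\sym(Z\setminus\{0\})$ so as to omit a fixed value $z_0$, and passing to the codomain $Z\setminus\{z_0\}$ gives a bijection between the $\widehat{\U}$-orbits of non-admissible maps and the orbits counted by $L(n-1,m-1)$; as $\tau$ only rearranges positions and does not touch $Z$, it is inert in this step, so the argument carries over unchanged. Hence the count with $|S^2|=m$ is $L(n,m)-L(n-1,m-1)$, and summing over $2\le m\le a(n)$ — for larger $m$ no surjection $G\times G\to Z\setminus\{0\}$ exists and the difference vanishes — yields the stated formula once $L(p,q)$ is identified with the double sum in~\eqref{eq_L}.

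To compute $L(p,q)$ I would apply the Cauchy--Frobenius lemma to $\widehat{\U}=\U\cup\U\tau$. The coset $\U$ contributes $\tfrac12 N(p,q)$, because $\tfrac1{|\U|}\sum_{h\in\U}|\operatorname{Fix}(h)|=N(p,q)$ is precisely the orbit count established via~\eqref{eq_N} in the proof of Theorem~\ref{thm_up_to_iso}. For $h=(\phi,\psi)\in\U$ with $\phi$ of cycle type $k\vdash p-q$ and $\psi$ of cycle type $j\vdash q-1$, one checks that $\mu$ is fixed by $h\tau$ exactly when $\psi\bigl(\mu(y,x)\bigr)=\mu(\phi x,\phi y)$ for all $x,y$; thus such a $\mu$ is freely determined by one value on each orbit of the permutation $\rho\colon(x,y)\mapsto(\phi y,\phi x)$ of $G\times G$, that value being constrained to be a fixed point of $\psi^{\ell}$ on $Z$ when the orbit has length $\ell$ — and there are $1+\sum_{d\mid\ell}dj_d$ of those. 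So $|\operatorname{Fix}(h\tau)|=\prod_{O}\bigl(1+\sum_{d\mid|O|}dj_d\bigr)$ over the $\rho$-orbits $O$. Since $\rho^2=\phi^2\times\phi^2$, I would split $G\times G$ into the blocks $C_1\times C_2\cup C_2\times C_1$ indexed by unordered pairs of distinct cycles of $\phi$ and the blocks $C\times C$ indexed by single cycles, evaluate each local product, and average over conjugacy classes, replacing $\tfrac1{|\U|}\sum_h$ by $\sum_{j\vdash q-1}\sum_{k\vdash p-q}(\prod_i j_i!\,i^{j_i}\prod_i k_i!\,i^{k_i})^{-1}$. The $\phi^2$-cycle structure shows that the off-diagonal block attached to cycles of lengths $a$ and $b$ contributes $p_{a,b}^{2}$, with $\lcm(2,a,b)$ as the common orbit length and $ab/\lcm(2,a,b)$ as the exponent in $p_{a,b}$; since there are $k_ak_b$ such blocks when $a\ne b$ and $k_a(k_a-1)/2$ when $a=b$, while each diagonal block of an $a$-cycle (of which there are $k_a$) contributes $q_a$, one obtains the factors $p_{a,b}^{2k_ak_b}$, $p_{a,a}^{k_a^2-k_a}$ and $q_a^{k_a}$, hence the second summand of~\eqref{eq_L}.

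The step I expect to be the main obstacle is the orbit count for the diagonal blocks, which is the source of the three-case expression for $q_a$. Identifying a cycle $C$ of length $a$ with $\mathbb{Z}/a\mathbb{Z}$ so that $\phi$ acts as $t\mapsto t+1$, the permutation $\rho$ restricts on $C\times C$ to $(x,y)\mapsto(y+1,x+1)$, and its orbits on $\mathbb{Z}/a\mathbb{Z}\times\mathbb{Z}/a\mathbb{Z}$ must be analysed directly: the diagonal is a single orbit of length $a$, while off the diagonal one finds orbits all of length $2a$ when $a$ is odd, all of length $a$ when $a\equiv0\bmod4$, and of length $a$ except for two orbits of length $a/2$ when $a\equiv2\bmod4$ — the splitting being governed by whether $a/2$ is an odd integer. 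Counting how many orbits of each length occur, and checking that their sizes sum to $a^2$, gives the stated $q_a$. The remaining point is bookkeeping: keeping ordered and unordered pairs of cycles straight, which is why $p_{a,b}$ enters~\eqref{eq_L} as the square root of a block contribution, with the manifestly even exponents $2k_ak_b$ and $k_a(k_a-1)$ absorbing the possibly half-integral exponent in $p_{a,b}$.
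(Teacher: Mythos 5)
Your proposal is correct and follows essentially the same route as the paper: the group $\widehat{\U}$ you construct is exactly the power group $2S^{\times 2}_{[n]\setminus[m]}\times U_m$ of Lemma~\ref{lem_iso}(ii), your Cauchy--Frobenius computation over the coset $\U\tau$ reproduces the constant form of the Power Group Enumeration Theorem (Theorem~\ref{thm_PGET}) combined with the cycle index of $2S_n^{\times 2}$ from Lemma~\ref{lem_cyc_ind}(ii) --- including the same diagonal/off-diagonal block decomposition and the same parity case analysis behind $q_a$ and $p_{a,b}$ --- and your surjectivity-by-subtraction step $L(n,m)-L(n-1,m-1)$ and summation over $m=|S^2|$ match the paper's argument. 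The only difference is organisational: you re-derive inline what the paper cites as separate lemmas.
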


\begin{table}
{\small
\caption{\label{tab_3nil_uptoequiv}Numbers of nilpotent semigroups of
  degree $3$ up to isomorphism or anti-isomorphism}
\begin{tabular}{rl}
\toprule
 $n$& number of non-(anti)-isomorphic nilpotent semigroups of degree $3$ of order $n$\\
\midrule
 3 &  1 \\
 4 &  8 \\
 5 &  84 \\
 6 &  2\,660 \\
 7 &  609\,797 \\
 8 &  1\,831\,687\,022 \\
 9 &  52\,966\,239\,062\,973 \\
 10 &  12\,417\,282\,095\,522\,918\,811 \\
 11 &  26\,530\,703\,289\,252\,298\,687\,053\,072 \\
 12 &  1\,008\,860\,098\,093\,547\,692\,911\,901\,804\,990\,610 \\
 13 &  1\,378\,288\,413\,994\,605\,341\,053\,354\,105\,969\,660\,808\,031\,163
\\
 14 &  36\,959\,929\,418\,354\,255\,758\,713\,676\,933\,402\,538\,920\,157\,765\,946\,956\,889\\
 15 &  14\,799\,968\,982\,226\,242\,179\,794\,503\,639\,146\,983\,952\,853\,044\,950\,740\,907\,666\,303\,436\,922\\
\bottomrule
\end{tabular}
}
\end{table}

A semigroup is \emph{self-dual} if it is isomorphic to its
dual. The concept of anti-isomorphism has no relevance for self-dual
semigroups. Combining Theorems~\ref{thm_up_to_iso} and~\ref{thm_up_to_equi},
it is possible to deduce a formula for the number of self-dual,
nilpotent semigroups of degree $3$ up to isomorphism. More generally,
considering semigroups of a certain type the number of self-dual
semigroups up to isomorphism is equal to twice the number of
semigroups up to isomorphism and anti-isomorphism minus the number of
semigroups up to isomorphism. 

\begin{corollary}
\label{coro_selfdual}
Let $n\in\N$ and let $N(p,q)$ and $L(p,q)$ be as defined in \eqref{eq_N} and \eqref{eq_L}, respectively.
Then the number of self-dual, nilpotent semigroups of degree $3$ and
order $n$ up to isomorphism equals
\begin{multline*}
\sum_{m=2}^{a(n)}\left(2L(n,m)-N(n,m)-2L(n-1,m-1)+N(n-1,m-1)\right)
\\\mbox{ \ where \ }a(n)=\left\lfloor n+1/2-\sqrt{n-3/4}\,\right\rfloor.
\end{multline*}
\end{corollary}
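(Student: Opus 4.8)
The plan is to derive the corollary from Theorems~\ref{thm_up_to_iso} and~\ref{thm_up_to_equi} via the standard bookkeeping for an order-two symmetry. Write $A(n)$ for the number of nilpotent semigroups of degree $3$ of order $n$ up to isomorphism and $B(n)$ for the number up to isomorphism or anti-isomorphism, and write $s(n)$ for the number of self-dual ones up to isomorphism. The first thing I would record is that the class of nilpotent semigroups of degree $3$ is closed under passing to the dual: possessing a zero, satisfying $S^3=\{0\}$, and satisfying $S^2\neq\{0\}$ are all conditions invariant under reversing the multiplication. Moreover $S\cong T$ implies $S^*\cong T^*$, so $[S]\mapsto[S^*]$ is a well-defined involution on the set of isomorphism classes of such semigroups, and its fixed points are precisely the self-dual classes.

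Next I would unwind the phrase ``up to isomorphism or anti-isomorphism''. Two such semigroups $S,T$ are identified exactly when $S\cong T$ or $S\cong T^*$; since $(S^*)^*=S$, this is an equivalence relation on isomorphism classes, each class of which has size $1$ (a self-dual class) or $2$ (a pair $\{[S],[S^*]\}$ with $S$ not self-dual). Letting $t(n)$ be the number of such dual pairs, we get $A(n)=s(n)+2t(n)$ and $B(n)=s(n)+t(n)$, hence $s(n)=2B(n)-A(n)$; this is exactly the identity quoted in the paragraph preceding the corollary.

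Finally I would substitute the two closed forms. By Theorem~\ref{thm_up_to_iso}, $A(n)=\sum_{m=2}^{a(n)}\bigl(N(n,m)-N(n-1,m-1)\bigr)$, and by Theorem~\ref{thm_up_to_equi}, $B(n)=\sum_{m=2}^{a(n)}\bigl(L(n,m)-L(n-1,m-1)\bigr)$. Plugging these into $s(n)=2B(n)-A(n)$ and collecting all terms under the single sum over $m$ gives
\[
s(n)=\sum_{m=2}^{a(n)}\bigl(2L(n,m)-N(n,m)-2L(n-1,m-1)+N(n-1,m-1)\bigr),
\]
which is the asserted formula once $a(n)=\lfloor n+1/2-\sqrt{n-3/4}\,\rfloor$ is substituted.

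There is no serious obstacle here; the only point needing care is the second step, namely making sure that ``up to isomorphism or anti-isomorphism'' is genuinely counted as the number of classes of the equivalence relation generated by isomorphism together with duality, and that this relation has only classes of size $1$ and $2$, so that the clean relation $s=2B-A$ holds verbatim. Once that is in place the corollary is an immediate consequence of the two theorems, modulo matching the indexing conventions in their statements.
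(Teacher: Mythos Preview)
Your proposal is correct and follows exactly the approach the paper indicates: the paragraph preceding the corollary states the identity $s=2B-A$ (twice the count up to isomorphism or anti-isomorphism minus the count up to isomorphism) and says the formula follows by combining Theorems~\ref{thm_up_to_iso} and~\ref{thm_up_to_equi}. Your involution argument for $s=2B-A$ and subsequent substitution spell out precisely this.
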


\begin{table}
{\small
\caption{\label{tab_3nil_selfdual}Numbers of self-dual nilpotent
  semigroups of degree $3$ up to isomorphism}
\begin{tabular}{rl}
\toprule
$n$ & number of non-isomorphic self-dual nilpotent semigroups of degree $3$ of
order $n$\\
\midrule
 3 & 1\\
 4 & 7\\
 5 & 50\\
 6 & 649\\
 7 & 19\,605\\
 8 & 1\,851\,252\\
 9 & 606\,097\,491\\
10 & 608\,877\,121\,317\\
11 & 1\,990\,358\,249\,778\,393\\
12 & 25\,835\,561\,207\,401\,249\,185\\
13 & 1\,739\,268\,479\,271\,518\,877\,288\,942\\
14 & 590\,686\,931\,539\,550\,985\,679\,107\,660\,268\\
15 & 846\,429\,051\,478\,198\,751\,690\,097\,659\,025\,763\,202\\
\bottomrule
\end{tabular}
}
\end{table}

Substituting in the previous corollary the actual formula for
$2L(p,q)$ we notice that $N(p,q)/2$ appears as term in $L(p,q)$ and
cancels. The resulting simplified formula is implemented as part of
the function {\tt Nr3NilpotentSemigroups} in {\sf Smallsemi}
\cite{smallsemi}.
 
Since commutative semigroups are self-dual, we obtain just one formula up
to isomorphism for commutative nilpotent semigroups of degree $3$.

\begin{theorem}
\label{thm_comm}
Let $n,p,q\in\N$. For $1\leq q < p$ denote
\begin{multline*}
K(p,q)=
\sum_{j \vdash q-1} \sum_{k \vdash p-q} \left[
\left(\prod_{i=1}^{q-1} j_i!\,i^{j_i}\prod_{i=1}^{p-q} k_i!\,i^{k_i}\right)^{-1}
\prod_{a=1}^{\lfloor \frac{n}{2} \rfloor} 
\left(1+ \sum_{d \mid a} dj_d\right)^{k_{2a}}
\left(1+ \sum_{d \mid 2a} dj_d\right)^{a{k_{2a}}}\cdot\right. \\
\left. \prod_{a=1}^{\lfloor \frac{n+1}{2} \rfloor}
\left(1+ \sum_{d \mid 2a-1} dj_d\right)^{ak_{2a-1}}
\prod_{a<b}
\left(1+ \sum_{d \mid \lcm(a,b)} dj_d\right)^{k_{a}k_{b}\gcd(a,b)}\right].
\end{multline*}
Then the number of nilpotent, commutative semigroups of degree $3$ and order
$n$ up to isomorphism equals
\begin{equation*}
\sum_{m=2}^{c(n)}\left(K(n,m)-K(n-1,m-1)\right)\mbox{ \ where \ }
c(n)=\left\lfloor n+3/2-\sqrt{2n+1/4}\,\right\rfloor.
\end{equation*}
\end{theorem}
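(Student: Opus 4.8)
The plan is to run the argument that underlies Theorems~\ref{thm_up_to_iso} and~\ref{thm_up_to_equi}, imposing in addition that the multiplication be symmetric. By the construction of Section~\ref{sec_constr} (compare Theorem~\ref{lem_3nil_all}(ii)), a commutative nilpotent semigroup $S$ of degree $3$ and order $n$ with $|S^2|=m$ amounts, after relabelling, to the following data: an $m$-set $W=S^2$ containing a distinguished zero, the $(n-m)$-set $V=S\setminus W$, and a symmetric map $\phi\colon V\times V\to W$ whose image together with the zero is all of $W$; associativity is automatic because every product of three elements lies in $S^{3}=\{0\}$. Any isomorphism of two such semigroups fixes the zero and preserves $S^2$, hence is a pair consisting of a permutation of $V$ and a permutation of $W\setminus\{0\}$ that is $\phi$-equivariant. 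Consequently the number of such semigroups with $|S^2|=m$ up to isomorphism equals the number of orbits of $\sym(n-m)\times\sym(m-1)$ on the surjective symmetric maps $\phi$.

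To avoid counting surjective maps directly I would, in parallel with $N(p,q)$ of~\eqref{eq_N}, let $K(p,q)$ be the number of orbits of $\sym(p-q)\times\sym(q-1)$ on \emph{all} symmetric maps from a $(p-q)$-point domain into a $q$-point codomain $W$ with distinguished zero. Since a map with $|\,\im\phi\cup\{0\}\,|=m'$ is, modulo the $\sym(W\setminus\{0\})$-action, a surjection onto a fixed $(m'-1)$-subset, one gets $K(p,q)=\sum_{m'=1}^{q}g(p-q,m')$, where $g(v,m')$ counts orbits of symmetric maps from a $v$-point domain into a pointed $m'$-set that hit every non-zero point. The crucial observation is that $K(n,m)$ and $K(n-1,m-1)$ share the domain size $n-m$, so the sum telescopes to $K(n,m)-K(n-1,m-1)=g(n-m,m)$, which is exactly the count from the previous paragraph. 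Summing over $2\le m\le c(n)$ then yields the theorem, the upper bound being the largest $m$ for which a surjective symmetric map can exist, i.e.\ the largest $m$ with $\binom{n-m+1}{2}\ge m-1$; rearranging $(n-m)(n-m+1)/2\ge m-1$ gives $m\le n+3/2-\sqrt{2n+1/4}$, whence $c(n)$.

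It remains to evaluate $K(p,q)$ by the power group enumeration machinery of Section~\ref{sec_PET}. Averaging over $\sym(p-q)\times\sym(q-1)$ and grouping by cycle types $k\vdash p-q$ and $j\vdash q-1$ produces the weight $\left(\prod_i i^{k_i}k_i!\prod_i i^{j_i}j_i!\right)^{-1}$, and what is left is to count, for a permutation $\sigma$ of $V$ of type $k$, the orbits of $\langle\sigma\rangle$ on the set of unordered pairs (with repetition) from $V$, raising the quantity $1+\sum_{d\mid\ell}dj_d$ — the number of points of $W$ fixed by $\tau^{\ell}$ — to the number of these orbits of length $\ell$. This orbit count decomposes into three kinds of pairs: two points in distinct cycles of different lengths $a<b$, giving $\gcd(a,b)$ orbits of length $\lcm(a,b)$ for each of the $k_ak_b$ cycle pairs; two points in distinct cycles of equal length; and two points in one cycle of length $a$, where for odd $a$ one obtains $(a+1)/2$ orbits of length $a$ per cycle, while for even $a$ one obtains $a/2$ orbits of length $a$ together with a single ``antipodal'' orbit of length $a/2$ per cycle — the last being the origin of the isolated factor $(1+\sum_{d\mid a}dj_d)^{k_{2a}}$ in the stated expression. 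Substituting these exponents into the base $1+\sum_{d\mid\ell}dj_d$ and collecting terms produces the displayed formula for $K(p,q)$.

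I expect this last orbit computation to be the main obstacle: passing from the familiar cycle index of $\sym(V)$ on \emph{ordered} pairs to the one on unordered pairs introduces the half-length orbits above, and the attendant parity book-keeping — the same phenomenon responsible for the three-case description of $q_a$ in Theorem~\ref{thm_up_to_equi} — is where the real work lies. The remaining ingredients (the telescoping identity, the value of $c(n)$, and the reduction of the isomorphism count to an orbit count) are routine once the construction of Section~\ref{sec_3nil} is in place, and I would assemble everything in Section~\ref{sec_proof}.
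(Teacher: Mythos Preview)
Your proposal is correct and follows essentially the same route as the paper. The paper packages the orbit computation on unordered pairs into Lemma~\ref{lem_cyc_ind}(iii) (the cycle index of $S_n^{\{2\}}$) and the reduction of isomorphism to a power group orbit into Lemma~\ref{lem_iso}(iii), then simply remarks that substituting $\mathcal{Z}(S^{\{2\}}_{[p]\setminus[q]})$ for $\mathcal{Z}(S^{\times 2}_{[p]\setminus[q]})$ in the proof of Theorem~\ref{thm_up_to_iso} gives Theorem~\ref{thm_comm}; your sketch carries out exactly these steps inline, including the same telescoping argument $K(n,m)-K(n-1,m-1)$ and the same derivation of $c(n)$.
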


\begin{table}
{\small
\caption{\label{tab_3nil_comm_upto}Numbers of commutative nilpotent
  semigroups of degree $3$ up to isomorphism}
\begin{tabular}{rl}
\toprule
$n$ & number of non-isomorphic commutative nilpotent semigroups of degree $3$
of order $n$\\
\midrule
 3 & 1\\
 4 & 5\\
 5 & 23\\
 6 & 155\\
 7 & 2\,106\\
 8 & 79\,997\\
 9 & 9\,350\,240\\
10 & 3\,377\,274\,621\\
11 & 4\,305\,807\,399\,354\\
12 & 23\,951\,673\,822\,318\,901\\
13 & 608\,006\,617\,857\,847\,433\,462\\
14 & 63\,282\,042\,551\,031\,180\,915\,403\,659\\
15 & 25\,940\,470\,166\,038\,603\,666\,194\,391\,357\,972\\
16 & 45\,946\,454\,978\,824\,286\,601\,551\,283\,052\,739\,171\,318\\
17 & 452\,361\,442\,895\,926\,947\,438\,998\,019\,240\,982\,893\,517\,749\,169\\
18 & 30\,258\,046\,596\,218\,438\,115\,657\,059\,107\,812\,634\,405\,962\,381\,166\,457\,711\\
19 & 12\,094\,270\,656\,160\,403\,920\,767\,935\,604\,624\,748\,908\,993\,169\,949\,317\,454\,767\,617\,795\\
\bottomrule
\end{tabular}
}
\end{table}

To determine the number of nilpotent semigroups of degree $3$ up to
isomorphism or up to isomorphism or anti-isomorphism, we use the
technique of power group enumeration in a similar way as Harrison did
for universal algebras~\cite{Har66}. In Section~\ref{sec_PET} we
present the relevant background material and a number of technical
results in preparation for Section~\ref{sec_proof} where we give the
proofs for Theorems~\ref{thm_up_to_iso}, \ref{thm_up_to_equi}, and
\ref{thm_comm}.

\subsection{Bounds and asymptotics}

\begin{table}
{\small
\caption{\label{tab_compare}Numbers of semigroups and nilpotent
  semigroups of degree $3$}
\begin{tabular}{rrrr}
\toprule
& number of semigroups & number of semigroups of\\
$n$ & up to isomorphism & degree $3$ up to isomorphism & lower bound $\lceil
z(n)/2n! \rceil$\\
& or anti-isomorphism & or anti-isomorphism\\
\midrule
3 & 18 & 1 & 1\\
4 & 126 & 8 & 4\\
5 & 1\,160 & 84 & 49\\
6 & 15\,973 & 2\,660 & 2\,146\\
7 & 836\,021 & 609\,797 & 589\,691\\
8 & 1\,843\,120\,128 & 1\,831\,687\,022 & 1\,827\,235\,556 \\
9 & 52\,989\,400\,714\,478 & 52\,966\,239\,062\,973 &
52\,952\,220\,887\,436\\
10 & {\it unknown} & 12\,417\,282\,095\,522\,918\,811 &
12\,416\,804\,146\,790\,463\,082\\
\bottomrule
\end{tabular}
}
\end{table}

The formula for the number of nilpotent semigroups of degree $3$ up to
isomorphism or anti-isomorphism in Theorem~\ref{thm_up_to_equi}
provides a new lower bound for the number of semigroups up to
isomorphism or anti-isomorphism of a given size. Presumably this bound
is asymptotic, that is, the ratio tends to $1$ while the order tends to
infinity, although this is not a consequence of the result for
semigroups up to equality in~\cite{KRS76}. The comparison in
Table~\ref{tab_compare} shows also that the lower bound $z(n)/2n!$
from \cite{JMS91} seems to converge rapidly towards our new bound. 
Analogous observations can be made considering only commutative
semigroups though the convergence appears slower as mentioned by
Grillet in the analysis in~\cite{Gri03}.

Our formulae also yield a large qualitative improvement over the old
lower bound since they give exact numbers of nilpotent semigroups of
degree $3$. In particular,  the provided numbers can be used to cut down
the effort required in an exhaustive search to determine the number of
semigroups of a given order, as already done for semigroups of
order 9 in~\cite{Dis10}.

The conjectured asymptotic behaviour of the lower bound of $z(n)/2n!$
for the number of semigroups of order $n$  would imply
that almost all sets of isomorphic semigroups on $\{1,2,\dots,n\}$ are
of size $n!$. In other words, most semigroups have trivial
automorphism group; a property that is known for various types of
algebraic and combinatorial objects, for example graphs
\cite{ER63}. Our formulae could help to prove this conjecture at least
for nilpotent semigroups of degree $3$. In each summand in \eqref{eq_N}
those semigroups of degree $3$ are counted for which a bijection with
cycle structure corresponding to the partitions $j$ and $k$ is an
automorphism. It remains to estimate the contribution of all summands
that do not correspond to the identity map.


\section{Construction of nilpotent semigroups of degree 2 or 3}
\label{sec_constr}

In this section we describe how to construct nilpotent semigroups of
degree 2 or 3 on an $n$-element set. A similar construction is given in
\cite{KRS76}. For the sake of brevity we will denote by $[n]$
the set $\{1,2,\ldots, n\}$ where $n\in \N$.

\begin{defn}\label{defn_nil}
Let $n \geq 2$, let $A$ be a non-empty proper subset of $[n]$, and let $B$ denote the complement of 
$A$ in $[n]$. If $z\in B$ is arbitrary and $\psi: A \times A \to B$ is any function, then we can define 
multiplication on $[n]$ by
\begin{equation}
\label{eq_constr}
x y= 
\begin{cases}
\psi(x,y)&\text{if }x,y\in A\\
z&\text{otherwise.}
\end{cases}
\end{equation}
We will denote the set $[n]$ with the operation given above by $\U(A, \psi, z)$. 
\end{defn}

Any product $a b c$ in $\U(A, \psi,z)$ equals
$z$, and so the multiplication defined in \eqref{eq_constr} is associative. It follows that 
$\U(A, \psi, z)$ is a nilpotent  semigroup of degree 2 or 3. The
semigroup $\U(A, \psi, z)$ has degree 2 if and only if $\U(A, \psi, z)$ is a zero
semigroup if and only if $\psi$ is the constant function with value
$z$. Conversely, if $T$ is a nilpotent semigroup of degree $3$ with elements $[n]$, then setting
$A=T\setminus T^2$, letting $\psi:A\times A\to T^2$ be defined by $\psi(x,y)=xy$
for all $x,y\in T$, and setting $z$ to be the zero element of $T$, we see that
$T=\U(A, \psi, z)$.
Therefore when enumerating nilpotent semigroups of degree $3$ it
suffices to consider the semigroups $\U(A, \psi,z)$.


\section{Semigroups and commutative semigroups of degree $3$ up to
  equality}
\label{sec_3nil}

Denote by $Z_n$ the set of nilpotent semigroups of degree 3 on
$\{1,2,\ldots, n\}$. A formula for the cardinality of a
proper subset of $Z_n$ is stated in Theorem 15.3 of \cite{JMS91}. 
However, the formula given in \cite{JMS91} actually yields $|Z_n|$ and
this is what the proof of the theorem in \cite{JMS91} shows. Similarly,
the formula in Theorem 15.8 of \cite{JMS91} can be used to determine
the number of all commutative semigroups in $Z_n$ even though the
statement says otherwise. For the sake of completeness and to avoid
confusion we prove that the formulae as given in
Theorem~\ref{lem_3nil_all} are correct.

\noindent\proofref{lem_3nil_all}
In both parts of the proof,  we let $A$ be a fixed non-empty proper subset of $[n]=\{1,2,\ldots, n\}$, let $B$ denote the complement of $A$ in $[n]$, let $m=|B|$, and let $z\in B$ be fixed. We consider  semigroups of the form $\U(A, \psi, z)$ where $\psi:A\times A\to B$ as given in Definition \ref{defn_nil}.
 
{\bf (i).} The number of functions from $A\times A$ to
$B$ is $m^{(n-m)^2}$. To avoid counting semigroups twice for different $m$,
we will only consider those functions $\psi$ where every element in $B\setminus\{z\}$ appears in the image of $\psi$. For a subset $X$ of $B\setminus\{z\}$ of size $i$,
there are $(m-i)^{(n-m)^2}$ functions with no element from $X$ in their image. Using the  Inclusion-Exclusion Principle, the number of functions from $A\times A$ to $B$ with image containing   $B\setminus\{z\}$ is 
\begin{equation}
\label{count_different}
\sum_{i=0}^{m-1}(-1)^i{m-1 \choose i}(m-i)^{(n-m)^2}.
\end{equation}

The function $\psi$ is defined on a set with $(n-m)^2$ elements. Hence
the condition that $B\setminus\{z\}$ is contained in the image of $\psi$ 
implies that $m-1 \leq (n-m)^2$. Reformulation yields
\begin{equation}\label{eq_unequal}
m \leq n+1/2-\sqrt{n-3/4}.
\end{equation}
If $m=1$, then every function $\psi:A\times A\to B$ is constant, and
so, as mentioned above, $\U(A, \psi, z)$ is not nilpotent of degree $3$.
Summing \eqref{count_different} over all  appropriate values of $m$, the ${n\choose m}$ choices for $B$ and the $m$ choices for $z\in B$ concludes the proof of this part.

{\bf (ii).}    
If  $\U(A, \psi,z)$ is a commutative semigroup, then the function
$\psi:A\times A \to B$ is defined by its values on pairs
$(i,j)$ with $i \leq j$. There are
$(n-m)(n-m+1)/2$ such pairs and hence there are $m^{(n-m)(n-m+1)/2}$ such functions $\psi$. 

The rest of the proof follows the same steps as the proof of part (i)
with $m^{(n-m)(n-m+1)/2}$ replacing $m^{(n-m)^2}$ and where  the
inequality $m-1 \leq (n-m)(n-m+1)/2$ yields the parameter $c(n)$.
\qed


\section{Power group enumeration}
\label{sec_PET}
In this section, we shall introduce the required background material
relating to power group enumeration and determine the cycle indices of
certain power groups necessary to prove our main theorems. The
presentation in this section is based on \cite{HP73}.
 
Let $X$ be a non-empty set and let $S_X$ denote the symmetric group on
$X$. We again denote the set $\{1,2,\ldots, n\}$ by $[n]$, and will
write $S_n$ instead of $S_X$ if $X=[n]$. For a permutation $\pi\in
S_X$, let $\delta(\pi,k)$ denote the number of cycles of length $k$ in
the disjoint cycle decomposition of $\pi$. 

\begin{defn}\label{def_cyc_ind}
Let $G$ be a subgroup of {\color{blue} $S_n$}. Then the polynomial
\[
\mathcal{Z}(G;x_1,x_2,\ldots,x_n) = 
\frac{1}{|G|} \sum_{g \in G}\prod_{k=1}^n x_k^{\delta(g,k)}
\]
is called  the \emph{cycle index} of the group $G$; in short, we write $\mathcal{Z}(G)$.
\end{defn}

The cycle structure of a permutation $\pi\in S_n$ corresponds to a
partition of $n$, and all elements with the same cycle structure form
a conjugacy class of $S_n$. Remember that if $j$ is a partition of
$n$, written as $j \vdash n$, then we denote by $j_i$ the number of
summands equalling $i$. This yields $j_i=\delta(\pi,i)$ for all $i$
and for each element $\pi$ in the conjugacy class corresponding to
$j$. This observation allows us to write the cycle index of the
symmetric group in a compact form.

\begin{lemma}[{\cite[(2.2.5)]{HP73}}]
\label{lem_cycind_Sn}
The cycle index of $S_n$ is 
\[
\mathcal{Z}(S_n)=\sum_{j\vdash
  n}\left(\prod_{i=1}^nj_i!i^{j_i}\right)^{-1} \prod_{a=1}^nx_a^{j_a}.
\]
\end{lemma}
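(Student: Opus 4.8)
\textbf{Proof proposal for Lemma~\ref{lem_cycind_Sn}.}
The plan is to start directly from Definition~\ref{def_cyc_ind} and reorganise the sum over $S_n$ by conjugacy classes. Recall that two permutations of $[n]$ are conjugate in $S_n$ if and only if they have the same cycle type, and that cycle types are in bijection with partitions $j \vdash n$: the permutations in the class $C_j$ associated to $j$ are precisely those with $\delta(g,i) = j_i$ for all $i$. Hence every $g \in C_j$ contributes the same monomial $\prod_{a=1}^n x_a^{\delta(g,a)} = \prod_{a=1}^n x_a^{j_a}$ to $\mathcal{Z}(S_n)$, and we obtain
\[
\mathcal{Z}(S_n) \;=\; \frac{1}{n!}\sum_{j \vdash n} |C_j| \prod_{a=1}^n x_a^{j_a}.
\]

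It then remains to show that $|C_j| = n!\bigl(\prod_{i=1}^n j_i!\, i^{j_i}\bigr)^{-1}$. First I would do this by the orbit--stabiliser theorem: $|C_j| = |S_n|/|\stab(g)|$ where $\stab(g)$ is the centraliser of any $g \in C_j$. A permutation of cycle type $j$ is centralised exactly by permutations that permute its $j_i$ cycles of length $i$ among themselves and rotate each such cycle, so its centraliser is isomorphic to the direct product $\prod_{i=1}^n \bigl(C_i \wr S_{j_i}\bigr)$, which has order $\prod_{i=1}^n i^{j_i} j_i!$. Substituting this into the displayed identity gives exactly the claimed formula. (Alternatively, one can count $|C_j|$ directly: write the $n$ symbols in a row in one of $n!$ ways, cut the row into consecutive blocks of the prescribed lengths to form cycles, and divide by the $\prod_i i^{j_i}$ cyclic rotations within blocks and the $\prod_i j_i!$ reorderings of equal-length blocks.)

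I do not anticipate a genuine obstacle here: the only nontrivial ingredient is the standard centraliser/conjugacy-class count in $S_n$, and both routes to it are elementary. The one point to state carefully is that the indexing of cycle types by partitions is a bijection and that $\delta(g,i) = j_i$ on the class $C_j$, which is what licenses pulling the monomial out of the inner sum.
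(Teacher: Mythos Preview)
Your argument is correct. The paper does not actually prove Lemma~\ref{lem_cycind_Sn}; it simply quotes the result from \cite[(2.2.5)]{HP73}. The paragraph immediately preceding the lemma does, however, spell out exactly the regrouping you use---that cycle types correspond to partitions $j\vdash n$ and that $\delta(\pi,i)=j_i$ on the associated conjugacy class---so your approach is the one the authors have in mind. You supply the one ingredient the paper leaves to the reference, namely the conjugacy-class size $|C_j|=n!\big/\prod_i j_i!\,i^{j_i}$, and both of your derivations of it (centraliser via $\prod_i C_i\wr S_{j_i}$, or the direct overcount) are standard and sound.
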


In what follows we require actions other than the natural action of the
symmetric group $S_X$ on $X$. In particular, we require actions  on
functions in which two groups act independently on the domains
and on the images of the functions. If $G$ is a group acting on a set
$X$, then we denote by $x^g$ the image of $x\in X$ under the action of $g\in G$. 

\begin{defn}
\label{def_power_grp}
Let $A$ and $B$ be subgroups of $S_X$ and $S_Y$, respectively, where 
 $X$ and $Y$ are finite disjoint sets.
Then we define an action of the group $A\times B$ on the set $Y^X$ of functions from $X$ to $Y$ in the following way: the image of $f \in Y^X$ under $(\alpha,\beta)\in A\times B$ is given by
\[
f^{(\alpha,\beta)}(x)=\left(f\left(x^\alpha\right)\right)^\beta
\]
for all $x\in X$. We will refer to $A\times B$ with this action as a \emph{power group}.
\end{defn}

The cycle index itself is not required for the power groups used in this paper. 
Of interest is the constant form of the Power Group
Enumeration Theorem given below, which states the number of
orbits under the action of a power group. The result
goes back to de Bruijn~\cite{Bru59}, but is presented here in the form
given in~\cite[Section 6.1]{HP73}.

\begin{theorem}\label{thm_PGET}
Let $A\times B$ be a power group acting on the functions $Y^X$ as in Definition 
\ref{def_power_grp}.
Then the number of orbits  of $A\times B$ on $Y^X$  
equals
$$\frac{1}{|B|}\sum_{\beta\in B}\mathcal{Z}(A;c_1(\beta),c_2(\beta),\dots,c_{|X|}(\beta)),$$
where
$$c_i(\beta)=\sum_{d\mid i}d\,\delta(\beta,d).$$
\end{theorem}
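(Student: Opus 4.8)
\textbf{Plan of proof for Theorem~\ref{thm_PGET}.} The goal is to count the orbits of the power group $A\times B$ acting on $Y^X$ as in Definition~\ref{def_power_grp}, and the natural tool is Burnside's lemma (the Cauchy--Frobenius orbit counting formula): the number of orbits equals
\[
\frac{1}{|A\times B|}\sum_{(\alpha,\beta)\in A\times B}\bigl|\operatorname{Fix}(\alpha,\beta)\bigr|
=\frac{1}{|A||B|}\sum_{\beta\in B}\sum_{\alpha\in A}\bigl|\operatorname{Fix}(\alpha,\beta)\bigr|,
\]
where $\operatorname{Fix}(\alpha,\beta)=\set{f\in Y^X}{f^{(\alpha,\beta)}=f}$. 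So the whole proof reduces to computing, for fixed $\alpha\in A$ and $\beta\in B$, the number of functions $f:X\to Y$ satisfying $\bigl(f(x^\alpha)\bigr)^\beta=f(x)$ for all $x\in X$, and then recognising the inner sum over $\alpha$ as an evaluation of the cycle index $\mathcal{Z}(A)$.

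\textbf{Key steps.} First I would fix $\beta\in B$ and $\alpha\in A$ and analyse the fixed-point condition cycle by cycle on the domain. If $(x_0,x_1,\dots,x_{k-1})$ is a cycle of $\alpha$ of length $k$ (so $x_i^\alpha = x_{i+1}$, indices mod $k$), then the condition $f(x^\alpha)^\beta = f(x)$ forces $f(x_{i})=f(x_{i+1})^\beta$ for each $i$, hence $f(x_0)=f(x_0)^{\beta^{k}}$; conversely any choice of $y=f(x_0)$ fixed by $\beta^k$ determines $f$ on the whole $\alpha$-cycle and yields a valid partial function. Thus the number of admissible values on one $\alpha$-cycle of length $k$ is the number of fixed points of $\beta^k$ acting on $Y$, and since distinct $\alpha$-cycles impose independent constraints,
\[
\bigl|\operatorname{Fix}(\alpha,\beta)\bigr| = \prod_{k=1}^{|X|}\bigl(\#\operatorname{Fix}_Y(\beta^k)\bigr)^{\delta(\alpha,k)}.
\]
Second, I would compute $\#\operatorname{Fix}_Y(\beta^k)$: an element $y\in Y$ lying on a $\beta$-cycle of length $d$ is fixed by $\beta^k$ exactly when $d\mid k$, so $\#\operatorname{Fix}_Y(\beta^k)=\sum_{d\mid k} d\,\delta(\beta,d)=c_k(\beta)$, matching the statement's definition. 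Third, substituting back,
\[
\sum_{\alpha\in A}\bigl|\operatorname{Fix}(\alpha,\beta)\bigr|
=\sum_{\alpha\in A}\prod_{k=1}^{|X|}c_k(\beta)^{\delta(\alpha,k)}
=|A|\cdot\mathcal{Z}\bigl(A;c_1(\beta),\dots,c_{|X|}(\beta)\bigr)
\]
directly from Definition~\ref{def_cyc_ind}. Dividing by $|A||B|$ and summing over $\beta\in B$ gives the claimed formula.

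\textbf{Main obstacle.} There is no deep obstacle here; the argument is essentially a careful bookkeeping exercise. The one place demanding genuine attention is the cycle-by-cycle analysis of the fixed-point condition: one must verify both that the condition $f(x_{i})=f(x_{i+1})^\beta$ around an $\alpha$-cycle of length $k$ is consistent precisely when the chosen image of the base point is $\beta^k$-fixed, and that it then propagates to a well-defined function on the whole cycle (so the count is exactly $\#\operatorname{Fix}_Y(\beta^k)$, not something larger or smaller). A minor secondary point is keeping the order of composition straight in the definition $f^{(\alpha,\beta)}(x)=(f(x^\alpha))^\beta$, so that the recursion around a cycle is set up correctly; once that is pinned down, everything else is routine.
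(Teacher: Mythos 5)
Your proposal is correct: the paper itself gives no proof of Theorem~\ref{thm_PGET}, citing de Bruijn and Harary--Palmer instead, and your argument is precisely the classical one from those sources --- Cauchy--Frobenius (Burnside) applied to $A\times B$, with the fixed functions of $(\alpha,\beta)$ counted cycle-by-cycle on $X$, each $\alpha$-cycle of length $k$ contributing a free choice of a $\beta^k$-fixed point of $Y$, so that $|\operatorname{Fix}(\alpha,\beta)|=\prod_k c_k(\beta)^{\delta(\alpha,k)}$ and the inner sum over $\alpha$ is $|A|\,\mathcal{Z}(A;c_1(\beta),\dots,c_{|X|}(\beta))$. The cycle-consistency point you flag is handled correctly (the value propagates by $f(x_{i+1})=f(x_i)^{\beta^{-1}}$ and closes up exactly when the base value is fixed by $\beta^k$), so there is nothing to add.
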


To apply Theorem~\ref{thm_PGET} in the enumeration of nilpotent
semigroups of degree $3$ we require the cycle indices of the specific
group actions defined below. 

\begin{defn}\label{defn_actions}
Let $A$ be a group acting on a set $X$. Then we define:
\begin{enumerate}
\item by $A^{\times 2}$ 
the group $A$ acting on $X\times X$ 
componentwise, that is, 
$$(x_1,x_2)^{\alpha} =(x_1^{\alpha},x_2^{\alpha})$$ for $\alpha\in A$;

\item by $2A^{\times 2}$ the group $S_2\times A$  acting on $X\times X$ by 
$$(x_1, x_2)^{(\pi, \alpha)}=(x_{1^\pi}^\alpha, x_{2^\pi}^\alpha)$$
for $\alpha\in A$ and $\pi\in S_2$. 
\item
by $A^{\{2\}}$ the group $A$ acting pointwise on the set 
$\left\{ \{x_1,x_2\} \mid x_i \in X\right\}$ of subsets of a set $X$ with $1$ or $2$ elements, that is, 
$$\{x_1,x_2\}^{\alpha} =\{x_1^{\alpha},x_2^{\alpha}\}$$ for $\alpha\in A.$
\end{enumerate}
\end{defn}


We will show in Section \ref{sec_proof} that it is possible to
distinguish nilpotent semigroups of degree $3$ of the form
$\U(A,\psi,z)$ as defined in Definition \ref{defn_nil} up to isomorphism, and up to 
isomorphism or anti-isomorphism, by determining the orbit the function
$\psi$ belongs to under certain power 
groups derived from the actions in Definition \ref{defn_actions}.  

In the next lemma, we obtain the cycle indices of the groups
$S_n^{\times 2}, S_n^{\{2\}},$ and $2S_n^{\times 2}$ using the cycle
index of $S_n$ given in Lemma \ref{lem_cycind_Sn}. 

\begin{lemma}
\label{lem_cyc_ind}
For $n\in\N$ the following hold:
\begin{enumerate}
\item
the cycle index of $S_n^{\times 2}$ is
\begin{equation*}
\mathcal{Z}(S^{\times 2}_n) = \sum_{j \vdash n}
\left(\prod_{i=1}^n j_i!\,i^{j_i}\right)^{-1}
\prod_{a,b=1}^n x_{\lcm (a,b)}^{j_aj_b\gcd(a,b)};
\end{equation*}
\item
the cycle index of $2S_n^{\times 2}$ is
\begin{equation*}
\label{Z_twisted}
\mathcal{Z}(2S^{\times 2}_n) = \frac{1}{2}
\mathcal{Z}(S_n^{\times 2}) + \frac{1}{2}
\sum_{j \vdash n} \left(\prod_{i=1}^n j_i!\,i^{j_i}\right)^{-1}
\prod_{a=1}^{n}
\left(
q_a^{j_a}p_{a,a}^{j_a^2-j_a}\prod_{b=1}^{a-1} p_{a,b}^{2j_aj_b}
\right), 
\end{equation*}
where $p_{a,b}=x_{\lcm(2,a,b)}^{ab/\lcm(2,a,b)}$ and
\[
q_a=
\begin{cases}
x_ax_{2a}^{(a-1)/2} & \text{ if \ } a \equiv 1 \mod 2\\
x_a^a & \text{ if \ }  a \equiv 0 \mod 4\\
x_{a/2}^2x_a^{a-1} & \text{ if \ }  a \equiv 2 \mod 4;
\end{cases}
\]
\item
the cycle index of $S_n^{\{2\}}$ is
\begin{equation*}
\label{Z_S{2}}
\mathcal{Z}(S^{\{2\}}_n)=\sum_{j\vdash  n} \left( \prod_{i=1}^{n}j_i!i^{j_i}
\right)^{-1} \prod_{a=1}^{\lfloor n/2 \rfloor}r_a
\prod_{a=1}^{\lfloor (n+1)/2 \rfloor}s_a 
\prod_{a=1}^n t_a
\left(\prod_{b=1}^{a-1}x_{\lcm(a,b)}^{j_aj_b\gcd(a,b)}\right),
\end{equation*}
where the monomials are $r_a=x_a^{j_{2a}}x_{2a}^{aj_{2a}}$,
$s_a=x_{2a-1}^{aj_{2a-1}}$, and $t_a=x_a^{a(j_a^2-j_a)/2}$.
\end{enumerate}
\end{lemma}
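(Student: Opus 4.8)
The plan is to compute each of the three cycle indices by starting from the cycle index of $S_n$ in Lemma~\ref{lem_cycind_Sn} and analysing, for a permutation $\alpha\in S_n$ with cycle type $j\vdash n$, how the induced permutation on the relevant set (pairs, twisted pairs, or $2$-subsets) decomposes into cycles. Since every term in $\mathcal{Z}(S_n)$ is indexed by a partition $j$ and contributes weight $\left(\prod_i j_i!\,i^{j_i}\right)^{-1}$, it suffices in each case to fix one representative $\alpha$ of cycle type $j$ and determine the multiset of cycle lengths of the induced permutation, then convert that multiset into a monomial in the $x_k$.

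For part (i), $S_n^{\times 2}$, I would take two cycles of $\alpha$, one of length $a$ and one of length $b$ (possibly the same cycle), and observe that the induced permutation on the product of their point sets decomposes into $\gcd(a,b)$ cycles each of length $\lcm(a,b)$; this is the standard computation for the action of $\langle\alpha\rangle$ on ordered pairs. Running over all ordered pairs $(a,b)$ of cycles of $\alpha$ — which, when counted by cycle lengths, gives $j_aj_b$ ordered pairs of an $a$-cycle with a $b$-cycle — yields the monomial $\prod_{a,b=1}^n x_{\lcm(a,b)}^{j_aj_b\gcd(a,b)}$, and summing over $j\vdash n$ with the weights from $\mathcal{Z}(S_n)$ gives the claimed formula.

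For part (iii), $S_n^{\{2\}}$, the set of $1$- or $2$-element subsets splits into the $n$ singletons (on which $\alpha$ acts as it does on $X$, contributing the factor $\prod_a x_a^{j_a}$ — note this is what the diagonal $b=a$, $\lcm=a$, $\gcd=a$ contribution of the unordered version together with the $t_a$ and $s_a,r_a$ terms must reproduce) and the genuine $2$-subsets. For two distinct cycles of lengths $a<b$ the orbits on unordered pairs coincide with those on ordered pairs, giving $x_{\lcm(a,b)}^{j_aj_b\gcd(a,b)}$ (hence the $\prod_{b=1}^{a-1}$ factor). The work is in the ``within one cycle'' and ``between two cycles of the same length'' contributions: for a single $a$-cycle the $\binom{a}{2}$ unordered pairs of its points break into cycles whose lengths depend on the parity of $a$, producing the monomials $r_a,s_a$ (for $a$ even/odd) and the $t_a$ handling pairs drawn from two different cycles of the same length $a$. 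I would carry out the orbit-length count for pairs $\{x,\alpha^i x\}$ inside an $a$-cycle, splitting on whether $a$ is odd (all orbits length $a$, plus possibly a shorter exceptional one) or even, and similarly for part (ii) where the extra generator of $S_2$ swaps the two coordinates: here $\mathcal{Z}(2S_n^{\times 2})=\frac12\mathcal{Z}(S_n^{\times 2})+\frac12(\text{average over the non-trivial coset})$, and the second half requires computing, for the order-$2n_a$ or order-$n_a$ ``flip-and-rotate'' maps, the cycle structure leading to the case analysis defining $q_a$ modulo $4$ and to $p_{a,b}=x_{\lcm(2,a,b)}^{ab/\lcm(2,a,b)}$.

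The main obstacle is precisely this fine cycle-structure bookkeeping for the ``diagonal'' contributions in parts (ii) and (iii): determining, for the action of a single $a$-cycle (and, in part (ii), of that cycle composed with the coordinate swap) on ordered or unordered pairs of its own points, the exact multiset of orbit lengths, including the exceptional short orbit that appears when $a$ is even (fixed pairs $\{x,\alpha^{a/2}x\}$ or length-$2$ orbits), and tracking how the congruence class of $a$ mod $4$ changes the answer. Once these local computations are done, assembling the global monomial and summing over $j\vdash n$ with the weights inherited from $\mathcal{Z}(S_n)$ is routine, so I would present the three parts in the order (i), (iii), (ii) — or (i), then (ii) and (iii) together — devoting most of the written proof to the case analyses for $q_a$, $r_a$, $s_a$, $t_a$, and $p_{a,b}$.
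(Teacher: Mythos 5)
Your plan is correct and follows the paper's own proof essentially step for step: fix one representative $\alpha$ per cycle type with the weights $\bigl(\prod_i j_i!\,i^{j_i}\bigr)^{-1}$ from Lemma~\ref{lem_cycind_Sn}, count the orbits induced by each pair of cycles of $\alpha$ (giving $\gcd(a,b)$ orbits of length $\lcm(a,b)$ for part (i)), and treat the within-cycle and equal-length contributions in parts (ii) and (iii) by the parity and mod-$4$ case analysis you outline. The local computations you defer --- whether the paired orbits $\omega$ and $\bar{\omega}$ merge or split according to the parity of $\lcm(a,b)$, and the exceptional orbit at shift $a/2$ inside an even cycle, whose behaviour depends on $a \bmod 4$ --- are precisely the content of the paper's argument and yield the stated $q_a$, $p_{a,b}$, $r_a$, $s_a$, $t_a$, so the proposal is sound.
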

\begin{proof}
{\bf (i).}
By definition each permutation in $S_n$ induces a permutation in
$S_n^{\times 2}$. 
Let $\alpha\in S_n$ and let $z_a$ and $z_b$ be two cycles
thereof with length $a$ and $b$ respectively. Consider the action of
$\alpha$ on those pairs in $[n]\times [n]$ which have as first component an
element in $z_a$ and as second component an element in $z_b$. Let
$(i,j)\in [n]\times[n]$ be one such pair. Since $i^{\alpha^k}=i$ if and only
if $a$ divides $k$, and  $j^{\alpha^k}=j$ if and only if $b$ divides
$k$, the pair $(i,j)$ is in an orbit of length
$\lcm(a,b)$. The total number of pairs with first component in $z_a$
and second component in $z_b$ equals $ab$. Hence the number of orbits equals
$\gcd(a,b)$. Repeating this consideration for every pair of
cycles in $\alpha$ leads to 
$$\prod_{a,b=1}^n
x_{\lcm(a,b)}^{\delta(\alpha,a) \delta(\alpha,b) \gcd(a,b)}$$ 
as the summand corresponding to $\alpha$ in the cycle index
$\mathcal{Z}(S_n^{\times 2})$. This yields
\[
\mathcal{Z}(S_n^{\times 2})= \frac{1}{n!}\sum_{\alpha \in S_n} \prod_{a,b=1}^n
x_{\lcm(a,b)}^{\delta(\alpha,a) \delta(\alpha,b) \gcd(a,b)}.
\]
That the contribution of $\alpha$ to $\mathcal{Z}(S_n^{\times 2})$ only depends on its cycle
structure allows us to replace the summation over all group
elements by a summation over partitions of $n$; one for each conjugacy
class of $S_n$. The number of elements with cycle structure associated to a
partition $j\vdash n$ equals $n!/\prod_{i=1}^n j_i!\,i^{j_i}$. Therefore 
\[
\mathcal{Z}(S_n^{\times 2})= \frac{1}{n!}\sum_{j\vdash n}
\frac{n!}{\prod_{i=1}^n j_i!\,i^{j_i}}
\prod_{a,b=1}^n x_{\lcm(a,b)}^{j_a j_b \gcd(a,b)},
\]
and cancelling the factor $n!$ concludes the proof.

{\bf (ii).}
For elements  $(\id_{\{1,2\}}, \alpha)\in 2S^{\times 2}_n$  the contribution to the cycle
index of $2S^{\times 2}_n$ equals the contribute of $\alpha$ to $\mathcal{Z}(S^{\times 2}_n)$ given in (i). It is rearranged as follows to
illustrate which contributions come from identical cycles and which
from disjoint cycles:
\[
\prod_{a,b=1}^n x_{\lcm(a,b)}^{\delta(\alpha,a) \delta(\alpha,b) \gcd(a,b)}
= \prod_{a=1}^{n}
\left(
x_a^{a\delta(\alpha,a)}x_a^{a(\delta(\alpha,a)^2-\delta(\alpha,a))}\prod_{b<a} 
x_{\lcm(a,b)}^{2\delta(\alpha,a)\delta(\alpha,b)\gcd(a,b)}
\right).
\]

For group elements of the form $((1\,2), \alpha)$ the contribution is
going to be deduced from the one of $\alpha$. Let $z_a$ and $z_b$
again be two cycles in $\alpha$ of length $a$ and
$b$ respectively, and assume at first, they are disjoint. Then $z_a$
and $z_b$ induce $2\gcd(a,b)$ orbits of length $\lcm(a,b)$ on the $2ab$
pairs in $[n]\times[n]$ with one component from each of the two cycles. Let 
\begin{equation}
\label{eq_omega}
\omega = \left\{(i_1,j_1),(i_2,j_2),\dots,
(i_{\lcm(a,b)},j_{\lcm(a,b)})\right\}
\end{equation}
be such an orbit. Then
\begin{equation}
\label{eq_baromega}
\bar{\omega} = \left\{(j_1,i_1),(j_2,i_2),\dots,
(j_{\lcm(a,b)},i_{\lcm(a,b)})\right\}
\end{equation}
is another one. The set $\omega
\cup \bar{\omega}$ is closed under the action of $((1\,2), \alpha)$.
In how many orbits $\omega \cup \bar{\omega}$ splits depends on the
parity of $a$ and $b$. Acting with $((1\,2), \alpha)$ on $(i_1,j_1)$ for
$\lcm(a,b)$ times gives $(i_1,j_1)$ if $\lcm(a,b)$ is even and
$(j_1,i_1)$ if $\lcm(a,b)$ is odd. Hence the two orbits $\omega$ and
$\bar{\omega}$ merge to one orbit in the latter case and give two
new orbits of the original length otherwise. This yields the monomial
\begin{equation*}
x_{\lcm(2,a,b)}^{2ab/\lcm(2,a,b)} =
\begin{cases}
x_{\lcm(a,b)}^{2\gcd(a,b)} \mbox{ \ if \ } \lcm(a,b) \equiv 0 \bmod 2\\
x_{2\lcm(a,b)}^{\gcd(a,b)} \mbox{ \ if \ } \lcm(a,b) \equiv 1 \bmod 2,
\end{cases}
\end{equation*}
which appears $\delta(\alpha,a)\delta(\alpha,b)$ times if $a\neq b$
and $(\delta(\alpha,a)^2-\delta(\alpha,a))/2$ times if $a=b$.

Let $z_a$ and $z_b$ now be identical and equal to the cycle $(i_1 i_2
\cdots i_a)$. The contribution to the monomial of $\alpha$ is the
factor $x_a^a$. The orbits are of the form $\{(i_g,i_h) \mid 1 \leq
g,h \leq a, g \equiv h+s \bmod a\}$ for $0 \leq s \leq a-1$. For $s=0$ the
orbit consists of pairs with equal entries, that is, $\{(i_1,i_1),
(i_2,i_2)\dots (i_a,i_a)\}$, and thus stays the same under $((1\,2), \alpha)$.
For an orbit $\omega =\{(i_g,i_h) \mid 1 \leq g,h \leq a, g \equiv h+s
\bmod a\}$ with $s \neq 0$ define $\bar{\omega}$ as in (\ref{eq_baromega}).
 If $\omega \neq
\bar{\omega}$ one argues like in the case of two disjoint cycles and
gets the result depending on the parity of $a$. Note that $\omega =
\bar{\omega}$ if and only if $s=a/2$. In particular this does
not occur for $a$ odd in which case
$$
x_ax_{2a}^{(a-1)/2}
$$
is the factor contributed to the monomial of $((1\,2), \alpha)$. If on the
other hand $a$ is even, one more case split is needed to deal with the
orbit 
$$
\omega = \{(i_g,i_h) \mid 1 \leq g,h \leq a, g \equiv h+a/2 \bmod a\}.
$$
Acting with $((1\,2), \alpha)$ on $(i_a,i_{a/2})$ for $a/2$ times gives
$(i_a,i_{a/2})$ if $a/2$ is odd and $(i_{a/2},i_a)$ if $a/2$ is
even. Thus $\omega$ splits into two orbits of length $a/2$ in the
former case and stays one orbit in the latter. The resulting factors
contributed to the monomial of $((1\,2), \alpha)$ are therefore
\begin{eqnarray*}
x_a^a & \mbox{ if } &  a \equiv 0 \mod 4\\
x_{a/2}^2x_a^{a-1} & \mbox{ if } &  a \equiv 2 \mod 4.
\end{eqnarray*}
Following the analysis for all pairs of cycles in $\alpha$ leads to
the contribution of $((1\,2), \alpha)$ to the cycle index. Summing as
before over all partitions of $n$, which correspond to the
different cycle structures, proves the formula for
$\mathcal{Z}(2S^{\times 2}_n)$.

{\bf (iii).}
To compute $\mathcal{Z}(S^{\{2\}}_n)$ let $\omega$ and $\bar{\omega}$
as in (\ref{eq_omega}) and (\ref{eq_baromega}) be orbits for two
cycles $z_a$ and $z_b$ from $\alpha\in S_n$ acting on $[n]\times [n]$.
If the two cycles $z_a$ and $z_b$ are disjoint then both $\omega$ and
$\bar{\omega}$ correspond to the same orbit
\[
\left\{\{i_1,j_1\},\{i_2,j_2\},\dots,\{i_{\lcm(a,b)},j_{\lcm(a,b)}\}\right\}
\]
of $\alpha$ acting on $[n]^{\{2\}}$. The contribution to the monomial of
$\alpha$ in $\mathcal{Z}(S^{\{2\}}_n)$ is therefore $x_{\lcm(a,b)}^{\gcd(a,b)}$. Let $z_a$ and $z_b$ now be
identical and equal to the cycle $(i_1 i_2\cdots i_a)$. In
$S^{\times 2}_n$ this gave rise to the orbits $\{(i_g,i_h) \mid 1
\leq g,h \leq a, g \equiv h+s \bmod a\}$ for $0 \leq s \leq a-1$. The
corresponding orbit under $S^{\{2\}}_n$ for $s=0$ becomes
$\left\{\{i_1\},\{i_2\},\dots,\{i_a\}\right\}$. All other
orbits become $\{\{i_g,i_h\} \mid 1 \leq g,h \leq a, g \equiv h+s
\bmod a\}$ in the same way as before, but these are identical for $s$
and $a-s$. This yields one further exception if $a$ is even and $s=a/2$,
in which case the orbit collapses to $\{\{i_g,i_{g+a/2}\} \mid 1\leq
g\leq a/2\}$. In total, identical cycles lead to the monomials 
\begin{eqnarray*}
x_{a/2}x_a^{a/2} & \mbox{ if } &  a \equiv 0 \mod 2\\
x_a^{(a+1)/2} & \mbox{ if } &  a \equiv 1 \mod 2.
\end{eqnarray*}
Summing once more over conjugacy classes and making the case split
depending on the parity proves the formula for
$\mathcal{Z}(S_n^{\{2\}})$.
\end{proof}

Formulae like those in the previous lemma for slightly different actions
are given in~\cite[(4.1.9)]{HP73} and~\cite[(5.1.5)]{HP73}. The proof
techniques used here are essentially the same as in~\cite{HP73}. 


\section{Proofs of the main theorems}\label{sec_proof}

In this section, we prove Theorem~\ref{thm_up_to_iso}.  The proofs of Theorems
\ref{thm_up_to_equi}, and \ref{thm_comm} are very  similar to the proof of  Theorem~\ref{thm_up_to_iso}, and so, for the sake of brevity we  show how to obtain these proofs from the one presented, rather than giving the proofs in detail.

We consider the following sets of nilpotent semigroups of degree $3$:
for $m,n\in\N$ with $2\leq m\leq n-1$ we define
\begin{equation*}
Z_{n,m}=\left\{\:\U([n]\setminus[m], \psi, 1) \mid 
\psi:[n]\setminus [m]\times [n]\setminus [m]\to [m]
\mbox{ with }[m]\setminus \{1\} \subseteq \im(\psi)\:\right\},
\end{equation*}
where $\U([n]\setminus[m], \psi, 1)$ is as in Definition
\ref{defn_nil}, and $[n]$ is short for $\{1,2,\ldots, n\}$, as
before. From this point on, we will only consider semigroups
belonging to $Z_{n,m}$, and so we write $\U(\psi)$ instead of
$\U([n]\setminus [m], \psi, 1)$.

If $\U(\psi)\in Z_{n,m}$ is commutative, then we define a function $\psi'$ 
from the set of subsets of $[n]$ with $1$ or $2$ elements to $[m]$ by 
\begin{equation}\label{sets}
\psi'\{i,j\}=\psi(i,j)
\end{equation}
for $i \leq j$.
Since the equality $\psi(i,j)=\psi(j,i)$ holds for all $i$, $j$, 
 the function $\psi'$ is well-defined. 
Moreover, every  function from the set of subsets of $[n]$ with $1$ or
$2$ elements to $[m]$ is induced in this way by a function $\psi$ such
that $\U(\psi)\in Z_{n,m}$ and $\U(\psi)$ is commutative. 

\begin{lemma}
\label{lem_unique_non_gens}
Let $S$ be a nilpotent semigroup of degree $3$ with $n$ elements. Then
$S$ is isomorphic to a semigroup in $Z_{n,m}$ if and only if $m=|S^2|$.
\end{lemma}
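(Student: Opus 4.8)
The plan is to establish the two implications of the biconditional by exploiting the concrete normal form $\U(A,\psi,z)$ for nilpotent semigroups of degree $3$ recorded in Section~\ref{sec_constr}, together with the observation that the parameter $m$ attached to a member of $Z_{n,m}$ is intrinsically the size of the square of the semigroup. First I would prove the easy direction: suppose $S$ is isomorphic to some $T=\U(\psi)\in Z_{n,m}$. By construction $T$ has underlying set $[n]$, zero element $1$, and $A=[n]\setminus[m]$, with $\psi:A\times A\to[m]$ satisfying $[m]\setminus\{1\}\subseteq\im(\psi)$. Then $T^2=\im(\psi)\cup\{1\}$, and since $[m]\setminus\{1\}\subseteq\im(\psi)\subseteq[m]$ and $1\in T^2$ as well (because $T$ has degree $3$, not $2$, so some product is nonzero, but also $1$ itself is a product since $\psi$ is not constantly... more simply $1\in[m]=T^2$), we get $T^2=[m]$, hence $|T^2|=m$. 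Since $|S^2|$ is an isomorphism invariant, $|S^2|=|T^2|=m$.

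For the converse, suppose $S$ is a nilpotent semigroup of degree $3$ with $n$ elements and $m=|S^2|$. I would invoke the last paragraph of Section~\ref{sec_constr}: writing $S$ on the label set $[n]$ via any bijection, set $A'=S\setminus S^2$, define $\psi_0:A'\times A'\to S^2$ by $\psi_0(x,y)=xy$, and let $z'$ be the zero of $S$; then $S\cong\U(A',\psi_0,z')$. The task is to relabel so that this lands in $Z_{n,m}$: I need the complement of $A'$ to become exactly $[m]$ and the zero to become $1$. Here I use that $S^2$ has exactly $m$ elements and $z'\in S^2$; choose a bijection $[n]\to S$ carrying $[m]$ onto $S^2$ and $1$ onto $z'$ (possible since $1\in[m]$ and $z'\in S^2$), and carrying $[n]\setminus[m]$ onto $A'=S\setminus S^2$. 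Transporting the multiplication along this bijection yields a semigroup of the form $\U([n]\setminus[m],\psi,1)$ where $\psi$ takes values in $[m]$. It remains to check the image condition $[m]\setminus\{1\}\subseteq\im(\psi)$: every nonzero element of $S^2$ is, by definition of $S^2=S\cdot S$ combined with the fact that products of three elements vanish, a product of two elements of $S$, and such a product is nonzero only if both factors lie in $A'=S\setminus S^2$; hence every element of $S^2\setminus\{z'\}$ lies in $\im(\psi_0)$, which after relabelling says $[m]\setminus\{1\}\subseteq\im(\psi)$. Therefore $\U(\psi)\in Z_{n,m}$ and $S\cong\U(\psi)$.

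The main thing to be careful about is the claim, used in both directions, that every nonzero element of $S^2$ is a product of two elements drawn from $S\setminus S^2$ (equivalently, that $S^2\cdot S=\{z'\}=S\cdot S^2$, so the only products that escape the zero come from pairs of non-generators). This is exactly where degree $3$ is essential: if $x\in S^2$ then $x=uv$ for some $u,v$, so $xS=uvS\subseteq S^3=\{z'\}$ and likewise $Sx=\{z'\}$; thus a product $ab=c\ne z'$ forces $a,b\notin S^2$. This also guarantees that the relabelled $\psi$ genuinely maps $A\times A$ into $[m]$ and that the degree-$3$ hypothesis (some product nonzero) translates into $\psi$ being non-constant, matching the defining conditions of $Z_{n,m}$. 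Once this observation is in place, both implications are bookkeeping about bijections, so I expect no further obstacle; I would simply remark that the relabelling bijection exists because $|S\setminus S^2|=n-m$ matches $|[n]\setminus[m]|$ and $|S^2|=m$ matches $|[m]|$, with $z'\in S^2$ and $1\in[m]$ lined up.
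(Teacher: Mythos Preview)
Your proposal is correct and follows essentially the same approach as the paper: choose a bijection from $S$ to $[n]$ sending the zero to $1$ and $S^2$ onto $[m]$, transport the multiplication to obtain $\U(\psi)$, and verify the image condition $[m]\setminus\{1\}\subseteq\im(\psi)$ using that any nonzero product must have both factors in $S\setminus S^2$. The paper's proof only spells out the direction $m=|S^2|\Rightarrow S\cong\U(\psi)\in Z_{n,m}$, leaving the converse implicit; you treat both directions explicitly and give a cleaner justification of the key fact $S\cdot S^2=S^2\cdot S=\{z'\}$, but the underlying argument is the same.
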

\begin{proof} 
Let $z$ denote the zero element of $S$, and let  $f:S\to [n]$ be any
bijection such that $f(z)=1$ and $f(S^2)=[m]$. Then define
$\psi:([n]\setminus [m])\times ([n]\setminus [m])\rightarrow [m]$ by
$$\psi(i,j)=f(f^{-1}(i)f^{-1}(j)).$$
Now, since $S$ is nilpotent, if $x\in [m]\setminus \{1\}$, there exist
$s,t\in S\setminus S^2$ such that $f(st)=x$. Thus $\psi(f(s), f(t))=x$ and
$[m]\setminus \{1\} \subseteq \im(\psi)$. Hence $\U(\psi)\in Z_{n,m}$ and it
remains to show that $f$ is an isomorphism. If $x,y\in S\setminus
S^2$, then $f(x)f(y)=\psi(f(x),f(y)) = f(xy)$. 
Otherwise, $x\in S^2$ or $y\in S^2$, in which case
$f(x)f(y)=1=f(z)=f(xy)$.
\end{proof}

It follows from Lemma \ref{lem_unique_non_gens} that we can determine the number of 
isomorphism types in each of the  sets $Z_{n,m}$ independently. 
Of course, if $S$ is a nilpotent semigroup of degree $3$ and $m=|S^2|$, then it is not true in general that there
exists a unique semigroup in $Z_{n,m}$ isomorphic to $S$. 
Instead isomorphisms between semigroups in $Z_{n,m}$ induce an
equivalence relation on the functions  $\psi$, which define the
semigroups in $Z_{n,m}$. 

If $\U(\psi)\in Z_{n,m}$ and $T$ is a nilpotent semigroup of degree $3$
such that $\U(\psi)\cong T$, then there exists $\pi\in
S_n$ such that $S^\pi=T$. Hence $T\in Z_{n,m}$ if and only if $\pi$ stabilises
$[n]\setminus [m]$ and $\{1\}$ --
and hence $[m]\setminus \{1\}$ -- setwise.
In particular, the action of $\pi$ on the domain 
and range of $\psi$ are independent, and so equivalence can be
captured using a power group action.


\begin{lemma}\label{lem_iso}
For $m,n\in \N$ with $2\leq m\leq n-1$ let $\U(\psi), \U(\chi)\in
Z_{n,m}$, and let $U_m$ denote the pointwise stabiliser of $1$ in
$S_m$. Then the following hold: 
\begin{enumerate}
\item  the semigroups $\U(\psi)$ and $\U(\chi)$ are isomorphic if and
  only if $\psi$ and $\chi$ are in the same orbit under the power group
  $S_{[n]\setminus [m]}^{\times 2}\times U_m$;
\item the semigroups $\U(\psi)$ and $\U(\chi)$ are isomorphic or
  anti-isomorphic if and only if $\psi$ and $\chi$ are in the same
  orbit under the power group $2S_{[n]\setminus [m]}^{\times 2}\times U_m$.
\end{enumerate}
If in addition $\U(\psi)$ and $\U(\chi)$ are commutative, then:
\begin{enumerate}
\item[(iii)] the semigroups $\U(\psi)$ and $\U(\chi)$ are
  isomorphic if and only if $\psi'$ and $\chi'$
  (as defined in \eqref{sets}) are in the same orbit under the power group
  $S_{[n]\setminus [m]}^{\{2\}}\times U_m$.
\end{enumerate}
\end{lemma}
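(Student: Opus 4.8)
The plan is to prove the three statements in parallel, since they share a common mechanism: in each case an isomorphism (or anti-isomorphism) between semigroups in $Z_{n,m}$ is precisely a permutation $\pi \in S_n$ fixing the set $\{1\}$ and stabilising $[n]\setminus[m]$ setwise, and such a permutation decomposes into independent actions on the domain $([n]\setminus[m])\times([n]\setminus[m])$ and on the range $[m]$ of $\psi$. First I would establish the forward direction of~(i): if $f:\U(\psi)\to\U(\chi)$ is an isomorphism, then (as noted in the discussion preceding the lemma) $f$ extends to some $\pi\in S_n$ with $[n]\setminus[m]^\pi = [n]\setminus[m]$ and $1^\pi = 1$; writing $\alpha$ for the restriction of $\pi^{-1}$ to $[n]\setminus[m]$ and $\beta$ for the restriction of $\pi$ to $[m]$ (which lies in $U_m$ since it fixes $1$), the homomorphism condition $\pi(xy) = \pi(x)\pi(y)$ translated through~\eqref{eq_constr} reads exactly $\chi(x,y) = \psi(x^{\alpha^{-1}}, y^{\alpha^{-1}})^{\beta}$ for $x,y\in [n]\setminus[m]$, i.e. $\chi = \psi^{(\alpha,\beta)}$ in the notation of Definition~\ref{def_power_grp}. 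I would need to be slightly careful with the direction of the action and with inverses, but this is a bookkeeping matter; the essential point is that $\pi$ moves non-generators to non-generators precisely because $[n]\setminus[m] = S\setminus S^2$ is an invariant of the semigroup. Conversely, given $(\alpha,\beta)$ in the power group with $\chi = \psi^{(\alpha,\beta)}$, the permutation $\pi\in S_n$ that acts as $\alpha^{-1}$ on $[n]\setminus[m]$ and as $\beta$ on $[m]$ is an isomorphism $\U(\psi)\to\U(\chi)$ by the same computation run backwards, using that both semigroups send any product with a factor in $[m]$ to the common zero $1$.

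Next I would treat~(ii). The only new ingredient is that an anti-isomorphism $\U(\psi)\to\U(\chi)$ is an isomorphism $\U(\psi)^*\to\U(\chi)$, and $\U(\psi)^* = \U(\psi^\tau)$ where $\psi^\tau(x,y) = \psi(y,x)$; since $\U(\psi)^*$ is again nilpotent of degree $3$ with the same set of non-generators and the same zero, part~(i) applies to it. Combining "isomorphic or anti-isomorphic" amounts to allowing, in addition to the action of $S_{[n]\setminus[m]}^{\times 2}\times U_m$, the swap of the two coordinates of the domain — which is exactly the extra $S_2$-factor built into $2S_{[n]\setminus[m]}^{\times 2}$ in Definition~\ref{defn_actions}(ii). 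So the orbits of $2S_{[n]\setminus[m]}^{\times 2}\times U_m$ on functions $\psi$ are the unions of pairs of $S_{[n]\setminus[m]}^{\times 2}\times U_m$-orbits that are interchanged by $\tau$, and these are precisely the isomorphism-or-anti-isomorphism classes.

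For~(iii), the point is that when $\U(\psi)$ is commutative the data of $\psi$ is equivalent to the data of $\psi'$ defined on $[n]^{\{2\}}$ via~\eqref{sets}, and an isomorphism between commutative semigroups in $Z_{n,m}$ is a permutation $\pi$ as above whose action on unordered pairs $\{i,j\}\subseteq [n]\setminus[m]$ is the relevant symmetry; this is exactly the action defining $S_{[n]\setminus[m]}^{\{2\}}$ in Definition~\ref{defn_actions}(iii), again coupled with the independent $U_m$-action on the range. One should check that $\pi$ automatically preserves commutativity (immediate, since conjugation preserves the identity $xy=yx$) so that no semigroups are lost, and that the correspondence $\psi\leftrightarrow\psi'$ is a bijection between the relevant function sets intertwining the two group actions — both of which were already observed in the paragraph containing~\eqref{sets}. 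I expect the main obstacle to be purely notational rather than conceptual: keeping the direction of the power-group action consistent with the composition convention for $f$ and with the inverse appearing in Definition~\ref{def_power_grp}, so that $\chi = \psi^{(\alpha,\beta)}$ comes out with the right $\alpha$ versus $\alpha^{-1}$. Once that is fixed, all three parts follow from the same two displayed identities (the forward and backward translations of the homomorphism condition through~\eqref{eq_constr}).
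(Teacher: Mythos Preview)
Your plan is correct and follows essentially the same route as the paper's proof: decompose the (anti-)isomorphism $\pi$ into its restrictions $\sigma\in S_{[n]\setminus[m]}$ and $\tau\in U_m$, translate the homomorphism condition into the power-group action on $\psi$, handle~(ii) by adjoining the coordinate swap, and reduce~(iii) to~(i) via the bijection $\psi\leftrightarrow\psi'$. Your self-flagged worry about inverses is warranted---with your choice $\alpha=\pi^{-1}|_{[n]\setminus[m]}$ the displayed identity should read $\chi(x,y)=\psi(x^{\alpha},y^{\alpha})^{\beta}$ (not $\alpha^{-1}$) to match your conclusion $\chi=\psi^{(\alpha,\beta)}$---but as you say this is pure bookkeeping and does not affect the argument.
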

\begin{proof} 
{\bf (i).}
($\Rightarrow$) By assumption there exists  $\pi\in S_{n}$
such that $\pi: \U(\psi)\to \U(\chi)$ is an isomorphism. 
From the comments before the lemma, $\pi$ stabilises $[n]\setminus [m]$ and $1$,
and so there exist $\tau\in U_m$ and 
$\sigma\in S_{[n]\setminus [m]}$ such that $\tau\sigma=\pi$. Then for all
$x,y\in [n] \setminus [m]$
\[
\psi(x,y)=(\psi(x,y)^{\pi})^{\pi^{-1}} = (\chi(x^{\pi},
y^{\pi}))^{\pi^{-1}} = (\chi(x^{\sigma}, y^{\sigma}))^{\tau^{-1}}.
\]
It follows that $\chi$ acted on by $(\sigma, \tau^{-1})\in
S_{[n]\setminus [m]}^{\times 2}\times U_m$ equals $\psi$, as required. 

($\Leftarrow$) Since $\psi$ and $\chi$  lie in the same orbit under
the action of the power group $S_{[n]\setminus [m]}^{\times 2}\times U_m$, there exist
$\sigma\in S_{[n]\setminus [m]}$ and $\tau\in U_m$ such that $\psi^{(\sigma, \tau)}=\chi$. 
Let $\pi=\sigma\tau^{-1} \in S_{n}$.
We will show that $\pi$ is an isomorphism from $\U(\psi)$ to
$\U(\chi)$. Let $x,y\in [n]$ be arbitrary. If $x,y\in [n]\setminus [m]$, then 
\[
x^{\pi}y^{\pi} = \psi(x^{\sigma}, y^{\sigma}) =
(\psi(x^{\sigma},y^{\sigma})^{\tau})^{\tau^{-1}} = 
(\psi^{(\sigma, \tau)}(x,y))^{\tau^{-1}} = (\chi(x,y))^{\tau^{-1}} =
(xy)^{\pi}.
\]
If $x\in [n]\setminus[m]$ and $y\in[m]$, then 
$(xy)^{\pi}=1^{\pi}=1=x^{\sigma} y^{\tau^{-1}} = x^{\pi}y^{\pi}$.
The case when $x\in [m]$ and $y\in[n]\setminus[m]$ and the case when $x,y\in
[m]$ follow by similar  arguments. 

{\bf (ii).} In this part of the proof we write $(\alpha, \beta, \gamma)$ instead of $((\alpha, \beta), \gamma)$ when referring to elements of $2S_{[n]\setminus [m]}^{\times 2}\times U_m$.

($\Rightarrow$) If $\U(\psi)$ and $\U(\chi)$ are isomorphic, then, by part (i), the functions $\psi$ and $\chi$ are in the same orbit under the
action of $S_{[n]\setminus [m]}^{\times 2}\times U_m$.
Since $S_{[n]\setminus [m]}^{\times 2}\times U_m$ is contained in $2S_{[n]\setminus [m]}^{\times 2}\times U_m$, 
it follows that $\psi$ and $\chi$ are in the same orbit
under the action of $2S_{[n]\setminus [m]}^{\times 2}\times U_m$.

If $\U(\psi)$ and $\U(\chi)$ are not isomorphic, then there exists  $\pi \in S_{n}$ such that 
$\pi:\U(\psi)\to \U(\chi)$ is an anti-isomorphism. As in the
proof of part (i), there exist $\tau\in U_m$ and $\sigma\in S_{[n]\setminus [m]}$ such that $\pi=\tau\sigma$. 
Then, for all $x,y\in [n]\setminus [m]$,
\[
\psi(x,y)=(\psi(x,y)^{\pi})^{\pi^{-1}} = (\chi(y^{\pi},
x^{\pi}))^{\pi^{-1}} = (\chi(y^{\sigma}, x^{\sigma}))^{\tau^{-1}} =
\chi^{(\sigma, \tau^{-1})}(y,x).
\]
Hence $\chi$ acted on by $((1\,2), \sigma, \tau^{-1})\in 2S_{[n]\setminus [m]}^{\times 2}\times U_m$ equals $\psi$. 

($\Leftarrow$) If $\psi=\chi^{(\id_{\{1,2\}}, \sigma, \tau)}$ for some $(\id_{\{1,2\}}, \sigma, \tau)\in 2S_{[n]\setminus [m]}^{\times 2}\times U_m$, 
then $\U(\psi)$ and $\U(\chi)$ are isomorphic by part (i). 
So, we may assume that
$\psi=\chi^{((1\,2),\sigma, \tau)}$. Let $\pi=\sigma\tau^{-1} \in S_{n}$.
We show that $\pi$ is an anti-isomorphism from $\U(\psi)$ to
$\U(\chi)$. Let $x,y\in [n]$ be arbitrary. If $x,y\in [n]\setminus [m]$, then 
\[
x^{\pi}y^{\pi} = \psi(x^{\sigma}, y^{\sigma}) =
(\psi(x^{\sigma},y^{\sigma})^{\tau})^{\tau^{-1}} = 
(\psi^{((1\,2),\sigma, \tau)}(y,x))^{\tau^{-1}} = (\chi(y,x))^{\tau^{-1}} =
(yx)^{\pi}.
\]
If $x\in [n]\setminus[m]$ and $y\in[m]$, then 
$(xy)^{\pi}=1^{\pi}=1=y^{\tau^{-1}} x^{\sigma} = y^{\pi}x^{\pi}$.
The case when $x\in [m]$ and $y\in[n]\setminus[m]$ and the case when $x,y\in
[m]$ follow by similar  arguments. 

{\bf (iii).} 
The proof follows from (i) and the observation that $\psi'$ and $\chi'$
are in the same orbit under $S_{[n]\setminus [m]}^{\{2\}}\times U_m$ if and only if
$\psi$ and $\chi$ are in the same orbit under $S_{[n]\setminus [m]}^{\times 2}\times U_m$.      
\end{proof}


Lemma~\ref{lem_iso}(i) shows that the number of non-isomorphic semigroups in
$Z_{n,m}$ equals the number of orbits of functions defining
semigroups in $Z_{n,m}$ under the appropriate power group action. 
Together with Theorem~\ref{thm_PGET} this provides the essential
information required to prove the formula given in
Theorem~\ref{thm_up_to_iso} for the number of
nilpotent semigroups of degree $3$ of order $n$ up to isomorphism.

\noindent\proofref{thm_up_to_iso}
Denote by $U_q$ the stabiliser of $1$ in $S_q$. We shall first show
that $N(p,q)$ is the number of orbits of the power
group $S_{[p]\setminus [q]}^{\times 2}\times U_q$ on functions from
$([p]\setminus [q]) \times ([p]\setminus [q])$ to $[q]$. By
Theorem~\ref{thm_PGET} the latter equals
\begin{equation}
\label{Neq}
\frac{1}{(q-1)!}\sum_{\beta\in H}\mathcal{Z}(S_{[p]\setminus
  [q]}^{\times 2}; c_1(\beta), \ldots, c_{(p-q)^2}(\beta)),
\end{equation}
where 
\[
c_{i}(\beta)= \sum_{d \mid i} d \delta(\beta,d).
\]
If $\beta\in U_q$, then $\mathcal{Z}(S_{[p]\setminus [q]}^{\times 2};
c_1(\beta), \ldots, c_{(p-q)^2}(\beta))$ only depends on the cycle
structure of $\beta$ and is therefore an invariant of the conjugacy classes of
$U_q$. These conjugacy classes are in 1-1 correspondence with the
partitions of $q-1$. If $j$ is a partition of $q-1$ corresponding to
the conjugacy class of $\beta$, then $\delta(\beta,1) = j_1 + 1$ and
$\delta(\beta,i) = j_i$ for $i = 2,\dots, q-1$ (where $j_i$ denotes,
as before, the number of summands in $j$ equalling $i$). This yields that
$c_i(\beta)= 1 + \sum_{d \mid i} d\,j_d$. 
The size of the conjugacy class in $U_q$ corresponding to the partition $j$ is
$(q-1)!/\prod_{i=1}^{q-1} j_i!\,i^{j_i}$.
Hence summing over conjugacy classes in \eqref{Neq} gives:
\begin{equation}
\label{Nwithc}
\sum_{j \vdash q-1} \left(\prod_{i=1}^{q-1}
  j_i!\,i^{j_i}\right)^{-1}
\mathcal{Z}\left(S^{\times 2}_{[p]\setminus [q]}; 1+\sum_{d \mid 1} d j_d,\ldots, 1+\sum_{d \mid (p-q)^2} d j_d  \right).
\end{equation}
Substituting the cycle index of $S_{[p]\setminus [q]}^{\times 2}$ from
Lemma \ref{lem_cyc_ind}(i) into \eqref{Nwithc} yields the formula
given in the statement of the Theorem for $N(p,q)$.

By Lemma~\ref{lem_iso}(i), the number of non-isomorphic semigroups in
$Z_{n,m}$ for $m \in \N$ with $2\leq m\leq n-1$ equals the number of
orbits under the power group $S_{[n]\setminus [m]}^{\times 2}\times
U_m$ of functions from $([n]\setminus [m])\times ([n]\setminus [m])$ to
$[m]$ having $[m]\setminus \{1\}$ in their image. 
The orbits counted in $N(n,m)$ include those of functions
which do not contain $[m]\setminus \{1\}$ in their image. The number
of such orbits equals $N(n-1,m-1)$, the number of orbits of functions
with one fewer element in the image set. Hence the number of
non-isomorphic semigroups in $Z_{n,m}$ equals
$N(n,m)-N(n-1,m-1)$. With Lemma~\ref{lem_unique_non_gens}, it
follows that the number of non-isomorphic nilpotent semigroups of degree
$3$ with $n$ elements is
\begin{equation*}
\sum_{m=2}^{a(n)}\left(N(m,n)-N(m-1, n-1)\right)\mbox{ \ where \ } 
a(n)=\left\lfloor n+1/2-\sqrt{n-3/4}\,\right\rfloor.\qedhere
\end{equation*}

Replacing the cycle index in \eqref{Neq} by that of 
$2S^{\times 2}_{[p]\setminus [q]}$ and $S^{\{2\}}_{[p]\setminus [q]}$
proves Theorems~\ref{thm_up_to_equi} and \ref{thm_comm}, respectively, using the same argument as above.


\bibliographystyle{alpha}
\bibliography{3nil}{}

\end{document}